 \newcommand{\nc}{\newcommand}
\newcommand\g{{\mathfrak{g}}}
\newcommand\h{{\mathfrak{h}}}
 \renewcommand{\aa}{\mathfrak{a} } \newcommand\aff{{\mathfrak{aff}}}
\nc{\bb}{\mathfrak{b} }
 \nc{\cc}{\mathfrak{c} }  \nc{\dd}{\mathfrak{d} } 
    \nc{\ggo}{\mathfrak{g} }
 \nc{\hh}{\mathfrak{h} }  \nc{\ii}{\mathfrak{i} }
 \nc{\jj}{\mathfrak{j} }  \nc{\kk}{\mathfrak{k} }
\nc{\mm}{\mathfrak{m} }   \nc{\nn}{\mathfrak{n} }
\nc{\pp}{\mathfrak{p} }   
\nc{\rr}{\mathfrak{r} } \nc{\sg}{\mathfrak{s} }
 \nc{\sso}{\mathfrak{so} }  \nc{\spg}{\mathfrak{sp} }
 \nc{\sug}{\mathfrak{su} }  \nc{\ssl}{\mathfrak{sl} }
 \nc{\tog}{\mathfrak{t} }  \nc{\uu}{\mathfrak{u} }
 \nc{\vv}{\mathfrak{v} } \nc{\ww}{\mathfrak{w} }
 \nc{\zz}{\mathfrak{z} }
\nc{\CC}{{\mathbb C}}
 \nc{\DD}{{\mathbb D}}
\nc{\FF}{{\mathbb F}}
\nc{\GG}{{\mathbb G}}  
\nc{\HH}{{\mathbb H}}
\nc{\II}{{\mathbb I}}
\nc{\JJ}{{\mathbb J}}
\nc{\KK}{{\mathbb K}}
\nc{\NN}{{\mathbb N}}
\nc{\RR}{{\mathbb R}}  
 \nc{\ZZ}{{\mathbb Z}}
\nc{\ggob}{\overline{\mathfrak{g}}} 
\nc{\glg}{\mathfrak{gl} }
\nc{\pca}{\mathcal{P}} \nc{\nca}{\mathcal{N}}
 \nc{\vp}{\varphi} \nc{\ddt}{\frac{{\rm d}}{{\rm d}t}}
 \nc{\la}{\langle} \nc{\ra}{\rangle}
 \nc{\SO}{{\sf SO}} \nc{\Spe}{{\sf Sp}} \nc{\Sl}{{\sf Sl}}
 \nc{\SU}{{\sf SU}} \nc{\Or}{{\sf O}} \nc{\U}{{\sf U}}
 \nc{\Gl}{{\sf Gl}} \nc{\Se}{{\sf S}} \nc{\Cl}{{\sf Cl}}
 \nc{\Spin}{{\sf Spin}} \nc{\Pin}{{\sf Pin}}
 \nc{\Ta}{{\sf T}}
 \nc{\Id}{\operatorname{Id}} 
 \nc{\ad}{\operatorname{ad}} \nc{\Ad}{\operatorname{Ad}}
 \nc{\coad}{\operatorname{coad}}
 \nc{\rank}{\operatorname{rank}} \nc{\Irr}{\operatorname{Irr}}
\nc{\Ima}{\operatorname{Im}}
 \nc{\End}{\operatorname{End}} \nc{\Aut}{\operatorname{Aut}}
\nc{\Aff}{\operatorname{Aff}}
\nc{\GL}{\operatorname{GL}}
 \nc{\Inn}{\operatorname{Inn}} \nc{\Der}{\operatorname{Der}}
 \nc{\Ker}{\operatorname{Ker}} \nc{\Iso}{\operatorname{I}}
 \nc{\Le}{\operatorname{L}} \nc{\tr}{\operatorname{tr}}
 \nc{\dif}{\operatorname{d}} \nc{\sen}{\operatorname{sen}}
 \nc{\modu}{\operatorname{mod}} \nc{\Ric}{\operatorname{R}}
 \nc{\Sym}{\operatorname{Sym}} \nc{\sca}{\operatorname{sc}}
 \nc{\scalar}{{\sf s}} \nc{\grad}{\operatorname{grad}}
 \nc{\ricci}{\operatorname{r}} \nc{\riccin}{\operatorname{Ric}}
 \nc{\Lie}{\operatorname{L}} \nc{\ct}{\operatorname{T}}
 \theoremstyle{plain}
 \newtheorem{thm}{Theorem}[section]
 \newtheorem{prop}[thm]{Proposition}
 \newtheorem{cor}[thm]{Corollary}
 \newtheorem{lem}[thm]{Lemma}
 \theoremstyle{definition}
 \newtheorem{rem}[thm]{Remark}
 \newtheorem{exa}[thm]{Example}
\begin{document}


\title[From almost (para)-complex structures  to affine structures]
{From almost (para)-complex structures to affine structures on Lie groups}

\author[G. Calvaruso]{Giovanni Calvaruso}
\address{Giovanni Calvaruso: Dipartimento di Matematica e Fisica \lq\lq E. De Giorgi\rq\rq \\
Universit\`a del Salento\\
Prov. Lecce-Arnesano \\
73100 Lecce\\ Italy.}
\email{giovanni.calvaruso@unisalento.it}

\author[G. P. Ovando]{Gabriela P. Ovando}
\address{G. P. Ovando: CONICET and ECEN-FCEIA, Universidad Nacional de Rosario \\
Pellegrini 250, 2000 Rosario, Santa Fe, Argentina}
\email{gabriela@fceia.unr.edu.ar}

\subjclass[2010]{53C15, 53C55, 53D05, 22E25, 17B56}
\keywords{Complex and paracomplex structures, complex product structures, affine structures, left-symmetric algebras.}


\begin{abstract}   
 Let $G=H\ltimes K$ denote a semidirect product Lie group with Lie algebra $\ggo=\hh \oplus \kk$, where $\kk$ is an ideal and $\hh$ is a subalgebra of the same dimension as $\kk$. There exist some natural split isomorphisms $S$ with $S^2=\pm \Id$ on $\ggo$: given  any linear isomorphism $j:\hh \to \kk$, we get  the almost complex structure $J(x,v)=(-j^{-1}v, jx)$ and  the almost paracomplex structure $E(x,v)=(j^{-1}v, jx)$. In this  work we show that  the  integrability of the structures $J$ and $E$ above is equivalent to the existence of a left-invariant torsion-free connection $\nabla$ on $G$ such that $\nabla J=0=\nabla E$ and also to the existence of an affine structure on $H$. Applications  include complex, paracomplex  and symplectic geometries. 
\end{abstract}

\thanks{{\em Acknowledgements:} G. Ovando thanks  G. Calvaruso and the Universit\`a del Salento for the invitation and hospitality during her visit. \\
G. Calvaruso was partially supported by funds of the University of Salento and MIUR (within PRIN).
\\
G. Ovando was partially supported by ANPCyT, CONICET and SeCyT - Universidad Nacional de Rosario, Argentina.}


\maketitle

\section{Introduction}
 Given a real vector space $V$, a linear isomorphism $S\in \End(V)$  satisfying $S^2 = \lambda  \Id$, $\lambda = \pm 1$ is called {\em split} if it has exactly
two eigenspaces (of the complexification of the vector space, if $\lambda  = −1$) with the same
dimension. For instance,  on $\RR^{2n}$ both the classical complex structure
$$(x_1, x_2, \hdots, x_n,y_1, y_2, \hdots, y_n) \quad \mapsto \quad (-y_1,-y_2, \hdots, -y_n, x_1, x_2, \hdots, x_n),$$
and the classical involution
$$(x_1, x_2, \hdots, x_n,y_1, y_2, \hdots, y_n) \quad \mapsto \quad (y_1,y_2, \hdots, y_n, x_1, x_2, \hdots, x_n);$$
are split isomorphisms. 

These structures give rise to almost complex and almost paracomplex geometries on manifolds, which are induced by differential (1,1)-tensors on a differentiable manifold $M$. Both complex and paracomplex geometries are very active research fields (see in particular the survey \cite{CFG} and references therein for the last one). Other topics  related to them are, among others,  (almost) K\"ahler, para-K\"ahler,  bi-lagrangian, product and complex product structures and more generally, generalized complex and paracomplex geometry. Up to our knowledge, the first one to consider generalized complex and paracomplex structures simultaneously in a systematic way was  Vaisman in \cite{Va}. The aim of this paper is to  consider simultaneously a kind of complex and paracomplex structures on Lie groups.

Let $G=H \ltimes K$ be a semidirect product Lie group and denote by $\ggo$, $\hh$ and $\kk$ the corresponding Lie algebras.  Then, $\hh$ and $\kk$ are respectively a Lie subalgebra and an ideal of $\ggo$, inducing the natural splitting  $\ggo=\hh \oplus \kk$, which corresponds to the representation $\pi:\hh\to \End(\kk)$ acting by derivations, and is referred to in literature as the {\em semidirect product} (or {\em sum}) of $\hh$ and $\kk$. This algebraic setting was considered for almost contact structures in \cite{Ou}.

Suppose now that $\dim H = \dim K$  and consider a linear isomorphism $j:\hh\mapsto \kk$. Then, we can define on $\ggo$ a corresponding 
 almost complex structure $J$ and almost paracomplex structure $E$, respectively determined by 
$$J(x,v)=(-j^{-1} v, jx) \qquad \text{and} \qquad E(x, v)=(j^{-1} x, j v), \quad \mbox{ for all } x\in \hh, v\in \kk.  
$$
Thus, $\hh$ and $\kk$ are complementary and totally real subspaces for $J$.  

Moreover, because of its algebraic decomposition,  the semidirect product Lie group $G=H \ltimes K$  can be described in terms of a canonical left-invariant paracomplex structure $F$, 
 which at the level of the Lie algebra $\ggo$ is  defined by
$$F(x,v)=(x, -v),\quad \mbox{ for all } x\in \hh, v\in \kk.$$

The following  natural questions motivate our research in this context:

\begin{enumerate}[(i)] {\em 
\item  How to characterize the integrability of the split structures  $J$ and   $E$?
\item Is there any distinguished connection  for $J$ or $E$ on $\ggo$?  More explicitly, when there exists a torsion-free connection on $\g$, such that $\nabla J=0=\nabla E$?}
\end{enumerate}

The results here  can also be  seen as a kind of generalization of the following known fact  (see \cite{Chu}). A symplectic structure $\omega$ on a Lie algebra $\ggo$ gives rise to an affine structure  $\nabla$ on $\ggo$,  implicitly defined by
$$\omega(\nabla_x y, z)=\omega(y,[x,z]),$$
so that we get a converse in a sense we shall explain. 
 In the present work we study the above questions taking into account the algebraic setting underlying the semidirect product structure on $G$. Notice that the condition $F=E J$ is always satisfied. Consequently, both pairs $\{J,E\}$  and $\{J,F\}$ give rise to a left-invariant {\em almost complex product structure} on $G$ (see \cite{AS}).

The results we obtain along the article  prove the following equivalences: 

\begin{enumerate}[(i)]
\item {\em The almost complex structure $J$ is integrable;} 
\item {\em The almost complex structure $E$ is integrable;} { (Theorem 3.2)}.
\item {\em There exists a left-invariant torsion-free connection $\nabla$ on $G$ such that $\nabla J=0$ and $\nabla E=0$;}
 {(Theorem 4.5')}.
\item {\em There exists a compatible left-invariant affine connection on $H$;} { (Theorem 5.8}).
\end{enumerate}

The key  to prove this is based on the structure of $\kk$ and the linear map $j$. We prove  that {\em all the statements above are  equivalent to the fact that $\kk$ is abelian and $j:\hh\to \kk$ is a 1-cocycle of $(\hh, \pi)$}. In particular, once we know the representation, the integrability condition for $J$ and $E$ on $\ggo=\hh\oplus \kk$ becomes a linear equation. 

Affine structures became relevant since the  fundamental question of Milnor concerning the existence of complete affine structures on solvable Lie groups (see \cite{Mi} for details).  Making use of the above equivalences, one can  get  examples of totally real complex structures, paracomplex structures and  left-symmetric affine (LSA) structures in a direct way. We remark that in this framework the integrability condition for $J$  follows from a linear equation, once  the representation is given. Several further implications follow once we look more deeply  at the representation. Canonical examples where the situation above appears are both the tangent and the cotangent bundle of a given Lie group. Consequently, almost complex,  generalised complex and almost paracomplex geometries are all touch\'e by the results above. 
 
Finally, two  remarks  concerning these results:

\begin{itemize}
\item They generalize  previous results concerning complex product structures \cite{AS, BV},  complex and symplectic structures related to tangent algebras \cite{ABD,BD,COP}, complex and paracomplex structures on homogeneous manifolds \cite{CF}.

\item The existence of LSA structures imposes a clear obstruction. In fact,   the Lie algebras $\hh$ and  $\ggo$ { are necessarily} solvable, see \cite{Au,Mi}.
\end{itemize}

 The paper is organized in the following way. We introduce totally real almost complex and paracomplex structures on a {semidirect product Lie algebra in Section~2, and in Section~3 we characterize their integrability. In Section~4 we investigate the existence of a torsion-free connection for which the almost complex structure $J$ and almost paracomplex structure $E$ are  parallel. In Section~5 we  consider LSA structures and prove the last equivalence. Several examples will be described in Section~6, showing explicitly the construction of compatible LSA from totally real complex structures on a semidirect product Lie algebra and, conversely, given a Lie algebra $\hh$ with a compatible LSA, the construction of a totally real complex structure on semidirect product Lie algebra $\g =\hh \oplus \kk$.  
Finally, we show how an arbitrary inner product on $\h$ extends on $\g =\hh \oplus \kk$ to an inner product compatible with the $J$ above and to neutral metrics compatible with the $E$ and $F$ above, and give a condition ensuring the integrability of these almost 
K\"ahler and para-K\"ahler structures.

\section{Natural split isomorphisms for semidirect products}

 An {\em almost  product structure} on a Lie  group $G$ is determined by a  $(1,1)$-tensor $P$ on  $G$ such that $P^2=\Id$, but $\tilde P\neq \pm \Id$, which  gives rise to a splitting of the tangent bundle $T G$ into the
Whitney sum of two subbundles $T^{\pm} G$ corresponding to the {($\pm 1$)-eigenspaces} of $P$. 
 For a Lie group $G$, one usually asks the  almost product structure to be invariant under translations on the left by elements $g\in G$, so that  the $(1,1)$-tensor is determined at the Lie algebra $\ggo$ by   the endomorphism $P:\ggo \to \ggo$ satisfying $P^2=\Id$ but $P\neq \pm \Id$. 
{The {\em integrability} of such $P$ is expressed by the vanishing of its Nijenhuis tensor: $N_{P}\equiv 0$,} where
\begin{equation}\label{NijenhuisE}
N_{P}(Y,Z)=[P Y, P Z]+[Y,Z]-P[P Y,Z]-P[Y,P Z]\qquad \text{for all} \;\, Y,Z  \in \g .
\end{equation}
This integrability condition is equivalent to the fact that the eigenspaces of $P$ are subalgebras of $\ggo$. Globally the distributions determined by these eigenspaces by translations on the left, are completely integrable. Almost product structures which are integrable are simply called {\em product structures}.

If $\dim \hh=\dim \kk$, the almost product structure on $\ggo$ is split and it is called an {\em almost paracomplex structure}. When integrable, it is just called a {\em paracomplex structure}.

Product structures arise naturally in an algebraic context by
 considering the semidirect product of two  groups $H$ and $K$, via a group homomorphism $\pi: H \to \Aut(K)$. This  gives a decomposition 
$\ggo=\hh \oplus \kk$ into a direct sum as vector spaces, where $\hh$ is a subalgebra of $\g$, $\kk$ is an ideal of $\ggo$ and $\pi:\hh\to \End(\kk)$ is a representation acting by derivations.

Explicitly, the  Lie bracket on $\ggo$ satisfies 
\begin{equation}\label{corchete}
\begin{array}{rcll}
[x,y] & = & [x, y]_{\hh} & \text{for all} \;\, x, y\in \hh, \\[4pt]
{[x,v]} & = & \pi(x) v & \text{for all} \;\, x\in \hh, v\in \kk, \\[4pt]
 [u,v] & = & [u,v]_{\kk} & \text{for all} \;\, u,v \in \kk,
\end{array}
\end{equation}
where $[\cdot, \cdot]_{\hh}=\hh \times \hh \to \hh$ denotes the Lie bracket on $\hh$ and correspondingly for $[\cdot, \cdot]_{\kk}$. The Jacobi identity on $\ggo$ implies that $\pi:\hh \to \End(\kk)$ is a representation acting by derivations. 
Conversely, starting with a pair of Lie algebras $(\hh, [\cdot,\cdot]_{\hh})$ and $(\kk,[\cdot, \cdot]_{\kk})$  and a representation $\pi:\hh\to \End(\kk)$  acting by derivations,  the direct sum as vector spaces $\ggo=\hh\oplus \kk$  equipped with the binary operation in (\ref{corchete}) introduces  a Lie algebra structure on $\ggo$.  Abusing of names we  refer to such  Lie algebra $\ggo$  as the  {\em semidirect product Lie algebra} of $\hh$ and $\kk$ via $\pi$.  In this situation, the following exact sequence of Lie algebras  splits:
$$0 \quad  \longrightarrow \quad \kk  \quad \longrightarrow \quad \ggo \quad \longrightarrow \quad  \hh \quad \longrightarrow 0.$$ 

\noindent
For $\kk$  abelian and $\dim \kk=\dim \hh$,  the resulting semidirect product Lie algebra $\ggo=\kk \oplus \hh$ via the representation $\pi$ is sometimes called the {\em tangent Lie algebra} $\ct_{\pi} \hh$.

\begin{exa} \label{tan-co} Both the tangent and cotangent  bundles of a given Lie group $H$ admit a Lie group structure {which can be described in terms of semidirect products}.  In fact:
\begin{itemize}

\item The tangent bundle  $TH$  of a Lie group $H$ is identified with $H \times \hh$ as a semidirect product Lie group with Lie algebra $\ct \hh=\hh \oplus \hh$, the second copy of $\hh$ with the abelian Lie bracket and  the adjoint representation $\ad:\hh\to \End(\hh)$ as $\ad(x) v=[x,v]$ for $x,v\in\hh$. 

\item For the cotangent bundle of the Lie group $H$, one identifies $T^*H$ with $H\times \hh^*$, where one has the coadjoint representation $\ad^*:\hh \to \End(\hh^*)$ given  $\ad(x)^*\varphi (y)= -\varphi \circ \ad(x)(y)$ for all $x,y\in\hh$, $\varphi\in\hh^*$. It is usually denoted by $\ct^*\hh$   and called the cotangent Lie algebra of $\hh$. 

\end{itemize}
\end{exa}

A real linear isomorphism $S$ on a Lie algebra $\ggo$ satisfying $S^2 = - Id$ is an  {\em almost complex structure}. 

   Let  $J$ be almost complex. It  is said {\em  integrable} or simply a  {\em complex structure}  if and only if $N_J\equiv 0$ where 
\begin{equation}\label{NijenhuisJ}
N_J(Y,Z)=[JY,JZ]-[Y,Z]-J[JY,Z]-J[Y,JZ]\qquad \text{for all} \;\, Y,Z \in \g. 
\end{equation}
In this context, the integrabiliy condition of $J$ is equivalent to the fact that both eigensubspaces for $J$ (as a complex linear map) are Lie subalgebras of the complexification $\ggo^{\CC}$.

\begin{exa} Up to isomorphisms, there exist two non-isomorphic real Lie algebras in dimension two. One is the real abelian Lie algebra underlying $\CC$ which is a complex manifold and the other one,  denoted by $\aff(\RR)$ as it is Lie algebra underlying the group of affine motions of $\RR$, is spanned by the vectors $x,y$ satisfying the Lie bracket $[x,y]=y$. In the
abelian situation we also have a basis $x,y$ with $[x,y]=0$. 
In both cases, there exists a complex structure  defined by
$$Jx=y \qquad Jy=-x.$$ 
Observe that both  $\vv_1=\RR x$ and $\vv_2=\RR y$ are totally real subspaces of the semidirect product $\vv_1\oplus \vv_2$.

\end{exa}

A semidirect product Lie algebra $\ggo=\hh\oplus \kk$ has a natural product structure associated to this decomposition, given by  the  linear map 

\begin{equation}\label{defF}
F(x,v)=(x, -v), \quad x\in \hh, v\in \kk.
\end{equation}
%

Assume that $\dim \hh=\dim \kk$, so that the map $F$ above constitutes a paracomplex structure on $\ggo$. 

{A  linear isomorphism $j: \hh \to \kk$ gives rise to split isomorphisms. In fact, it  defines }
\begin{itemize}
\item an almost complex structure $J$ on $\ggo$,  by 
\begin{equation}\label{defJ}
J(x, v) = (-j^{-1}v, jx)\quad \mbox{ for all } x\in \hh, v\in \kk.
\end{equation}
\item an almost paracomplex structure $E:\ggo  \to \ggo$, by
\begin{equation}\label{defE}
E(x,v)=(j^{-1}v, jx), \quad x\in \hh, v\in \kk.
\end{equation}
\end{itemize}

We  observe that for such $J$, the subspaces $\hh$ and $\kk$ are totally real, that is, they satisfy $J\hh\cap \hh=\{0\}$ and the same holds for $\kk$. Such an almost complex structure was called  {\em totally real} with respect to the decomposition $\ggo=\hh\oplus \kk$ in \cite{COP}. 

Conversely, if an almost complex structure on $\ggo=\hh\oplus \kk$ satisfies $J\h =\kk$, then the map $j:=J|_{\h}: \h \to \kk$ is a linear isomorphism.

It is easy to check that  the  ($\pm 1$)-eigenspaces of the almost paracomplex structure $E$ are the subspaces given by
$$E_{\pm 1}=\{(x,\pm jx) \quad : \quad x\in \hh\}.$$
Again, $E\h=\kk$ and we   call $E$  a {\em totally real almost paracomplex structure}.

\begin{exa} We note that a \lq\lq totally real complex structure\rq\rq \ $J$ always underlies a fixed decomposition on a given Lie algebra. 
For instance, take the real Lie algebra $\ct^* \aff(\RR)$   with decomposition $\aff(\RR)\oplus \aff(\RR)^*$, where $\aff(\RR)=span\{e_1, e_2\}$ with $[e_1,e_2]=e_2$ and $\aff(\RR)^*=span\{e_3,e_4\}$. From the coadjoint action one gets $e_4=-[e_1,e_4]=[e_2,e_4]$. On $\ct^* \aff(\RR)$ consider the complex structure given by 
$$Je_1=e_2\qquad Je_3=-e_4. $$ 
This $J$ is not totally real with respect to the decomposition $\aff(\RR)\oplus \aff(\RR)^*$. {However,} it is totally real for  the decomposition $\hh\oplus \kk$, where  $\hh=span\{e_1, e_3\}$ and $\kk=span\{e_2, e_4\}$.
\end{exa}

 Thus, given the  Lie algebra $\ggo$ with a fixed splitting $\ggo=\hh \oplus \kk$, there is a one-to-one correspondence between:
\begin{itemize}
\item[(i)] totally real almost complex structures on $\ggo$;
\item[(ii)] totally real almost paracomplex structures on $\ggo$;
\item[(iii)] linear isomorphisms $j: \h \to \kk$. 
\end{itemize}

The above characterization shows that the class of totally real almost complex structures (equivalently, of totally real almost paracomplex structures)   on $\ggo=\hh \oplus \kk$, is large and increases quadratically in function of the dimension, since linear isomorphisms $j: \h \to V$ form an open subset of the  vector space of linear maps from $\hh$ to $\kk$ which has  dimension $n^2$, where $n=\dim \h$.

\begin{rem}
 Given an almost complex structure $J$ on a Lie algebra $\ggo$, it is always possible to have a decomposition of $\ggo$ as a direct sum as vector spaces of the form $\ggo=\uu \oplus J\uu$ for which $\uu$ and $J\uu$ are  totally real vector subspaces. 
 For considerations  whenever one only assumes that  $\uu$ is an ideal, we may refer to \cite{CCO}.

On the other hand, given a paracomplex structure on with splitting $\ggo=\uu_1\oplus \uu_2$, where $\uu_i$ are Lie subalgebras of the same dimension, it is always possible to define an almost complex structure $J$ as in Equation (\ref{defJ}) and another almost paracomplex structure as in Equation (\ref{defE}). 
\end{rem}

 Another geometry arising from split isomorphisms is given by  almost complex product structures. Recall that an {\em almost complex product structure} is a pair $(\mathbb J, \mathbb I)$ consisting of 
\begin{itemize}
\item an almost complex structure $\mathbb J$ and
\item an almost paracomplex structure $\mathbb I$,
\end{itemize} such that $\mathbb J \mathbb I = - \mathbb I \mathbb J$. 

Indeed, if the pair $(\mathbb J, \mathbb I)$ is an almost complex product structure, then the pair $(\mathbb J, \mathbb I \mathbb J)$ gives another almost complex product structure. 
We may refer to \cite{AS} for more information on complex product structures.

The pair consisting of the almost complex structure $J$ in Equation  \eqref{defJ} and the almost paracomplex structure $E$ in Equation \eqref{defE} satisfies $JE=-EJ$, and 
$$F= E J$$
gives  the canonical paracomplex structure described in Equation  \eqref{defF}. Therefore, both the pairs  $(J,E)$ and  $(J,F)$ are {\em almost complex product structures} on $\ggo$.

\section{Integrability conditions}

In this section we investigate the integrability conditions for the split isomorphisms $J$ and $E$ defined in Section~2. 
We first observe that for the almost complex and paracomplex structures introduced in Equations \eqref{defJ} and \eqref{defE} and for all $X,Y, Z \in \ggo$, the corresponding Nijenhuis tensors  verify the following relations: 

\medspace

\begin{tabular}{lll}
 $\bullet \qquad N_E(Z,X)=-N_E(X,Z)$ & and & $N_J(Z,X)=-N_J(X,Z)$, \\  
 $\bullet \qquad N_E(EY,EZ)  =  N_E(Y,Z)$ & and & $N_J(JY,JZ)= -N_J(Y,Z)$, \\
  $\bullet \qquad N_E(X,EZ)=-E N_E(X,Z)$ & and & $N_J(Y,JZ)=  -J N_J(Y,Z)$.
	\end{tabular}

\medspace

We emphasize  that although $J$ and $E$ need not be integrable,  the almost paracomplex structure $F$ is always integrable. 

Recall that given two Lie algebras $\hh$, $\kk$ and a representation $\pi:\hh \to \End(\kk)$ acting by derivations,  
a linear map $\theta: \hh \to \kk$ is called a {\em $1$-cocycle} of $(\hh, \pi)$  if
\begin{equation}\label{$1$-cocycle}
\pi(x) \theta(y)-\pi(y) \theta(x) - \theta[x,y]_{\hh}=0.
\end{equation}
 We  refer for instance to \cite{Va} for more details about  the cohomology theory of Lie algebras. 

\begin{rem}  Observe that $1$-cocycles always exist. In fact, given $(\hh,\pi$), it suffices to take a fixed vector $u\in\kk$ and define a linear map $\theta:\hh \to \kk$   by $\theta_u(x)=\pi(x) u$ for all $x\in\hh$. Such a $1$-cocycle is called a coborder. A coborder does not necessarily define an almost complex structure as in Equation (\ref{defJ}) on $\ggo=\hh\oplus \kk$. 
\end{rem}

Cocycles arise naturally  whenever we ask for integrability of the split isomorphisms $J$ and $E$ defined above. In fact, let $j:\hh \to \kk$ denote a linear isomorphism. The Nijenhuis tensor for $J$ 
yields that for all $x,y\in \hh$,   one has
$$
N_J(x, y)  =  [jx, jy]-[x,y]-J[jx,y]-J[x,j y] 
=[jx, jy]-[x,y]-j^{-1}\pi(y) jx +j^{-1}\pi(x) j y.
$$ 
It is clear that due to the relations satisfied by  the Nijenhuis tensor, the condition $N_J\equiv 0$ is equivalent to {requiring that} $N_J(x,y)=0$ for all $x,y\in \kk$. So, this holds if and only if for all $x,y\in \hh$
\begin{itemize}\label{integrability}
\item $[jx, jy]=0$ and
\vspace{4pt}\item $[x,y]+j^{-1}\pi(y) jx -j^{-1}\pi(x) j y=0$,
\end{itemize}
which is equivalent to
\begin{itemize}
\item $\kk$ is abelian and 
\item $j$ is a $1$-cocycle of $(\hh,\pi)$. 
\end{itemize}

More generally, we have the next result.

\begin{thm}\label{intJE} Let $\ggo=\hh\oplus \kk$ denote a semidirect product  Lie algebra. { Given a linear isomorphism $j: \hh \to \kk$, let  $J$ and $E$}  denote the almost complex and paracomplex structures respectively defined in  Equations \eqref{defJ} and \eqref{defE}. The following statements are equivalent:
\begin{itemize}
\item[(i)] the almost complex structure $J$ is integrable; 
\vspace{2pt}\item[(ii)] the almost paracomplex structure $E$ is integrable; 
\vspace{2pt}\item[(iii)] $\kk$ is abelian and the  linear isomorphism $j:\hh \to \kk$ is a $1$-cocycle of $(\hh, \pi)$. 
\end{itemize}
Consequently, when $\kk$ is abelian, the pair $(J,E)$ defines a complex product structure on $\ggo=\hh \oplus \kk$ for any $1$-cocycle $j$ of 
$(\hh, \pi)$.
\end{thm}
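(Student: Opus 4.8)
The plan is to prove the three equivalences (i) $\Leftrightarrow$ (ii) $\Leftrightarrow$ (iii) directly from the Nijenhuis tensors, exploiting the symmetry relations for $N_J$ and $N_E$ listed at the beginning of Section~3. The computation preceding the theorem statement already establishes (i) $\Leftrightarrow$ (iii): using the bilinearity of $N_J$ together with $N_J(JY,JZ)=-N_J(Y,Z)$ and $N_J(Y,JZ)=-JN_J(Y,Z)$, one sees that $N_J\equiv 0$ on all of $\ggo$ is equivalent to $N_J(x,y)=0$ for all $x,y\in\hh$ (since $J\hh=\kk$, every pair of vectors can be reduced via these relations to a pair in $\hh$). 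Expanding $N_J(x,y)$ for $x,y\in\hh$ gives exactly $[jx,jy] + j^{-1}\pi(x)jy - j^{-1}\pi(y)jx - [x,y]_{\hh}$, and since $j$ is an isomorphism this vanishes for all $x,y$ precisely when $[\cdot,\cdot]_{\kk}=0$ on $\Ima j=\kk$ (i.e.\ $\kk$ abelian) and $j$ satisfies the $1$-cocycle identity \eqref{$1$-cocycle}.

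Next I would carry out the same reduction for $E$. By the analogous relations $N_E(EY,EZ)=N_E(Y,Z)$ and $N_E(X,EZ)=-EN_E(X,Z)$, the vanishing $N_E\equiv 0$ reduces to $N_E(x,y)=0$ for $x,y\in\hh$. A short expansion using $E(x,v)=(j^{-1}v,jx)$ and the bracket formulas \eqref{corchete} gives
\begin{equation*}
N_E(x,y) = [jx,jy]_{\kk} + [x,y]_{\hh} - j^{-1}\pi(x)jy - j^{-1}\pi(y)jx - \big([x,y]_{\hh}+[x,y]_{\hh}\big)+\dots
\end{equation*}
which, after collecting terms, again decouples into the condition that $\kk$ is abelian and that $j$ is a $1$-cocycle. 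Hence (ii) $\Leftrightarrow$ (iii), and the chain of equivalences is complete. The only mildly delicate point is keeping track of signs in the $E$-expansion, since $E$ is $+1$ on the diagonal-type eigenspaces rather than involving $j^{-1}$ with a minus sign as $J$ does; but the structural outcome is forced to match because $F=EJ$ is always integrable, so the obstruction to integrating $E$ must coincide with the obstruction to integrating $J$.

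For the final assertion: once $\kk$ is abelian, the equivalence (i) $\Leftrightarrow$ (iii) shows that any $1$-cocycle $j$ of $(\hh,\pi)$ that happens to be a linear isomorphism makes $J$ integrable, and simultaneously (ii) $\Leftrightarrow$ (iii) makes $E$ integrable. Since we already observed in Section~2 that $JE=-EJ$ holds for any linear isomorphism $j$, the pair $(J,E)$ is an almost complex product structure, and now with both $J$ and $E$ integrable it is a genuine complex product structure. I expect the main obstacle to be purely bookkeeping --- verifying that the reduction ``$N\equiv 0$ iff $N$ vanishes on $\hh\times\hh$'' is valid for $E$ using the stated identities, and that the sign conventions in the $E$-expansion produce the clean split into ``$\kk$ abelian'' plus ``$j$ a cocycle'' --- rather than any conceptual difficulty.
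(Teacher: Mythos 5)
Your proposal follows essentially the same route as the paper: reduce $N_J\equiv 0$ and $N_E\equiv 0$ to their values on $\hh\times\hh$ via the stated symmetry relations, expand, and read off that $\kk$ must be abelian and $j$ a $1$-cocycle of $(\hh,\pi)$. The only blemish is your displayed expansion of $N_E(x,y)$, whose correct form is $[jx,jy]+[x,y]+j^{-1}\pi(y)jx-j^{-1}\pi(x)jy$ (your signs and the trailing terms are off), but you flag the sign bookkeeping yourself and land on the correct decoupled conditions, which is exactly what the paper's proof does.
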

\begin{proof}
The equivalence of (i) and  (iii) was proved above. 
By an analogous argument for $E$, given $x, y\in \hh$,  we have
$$
N_E(x, y) =  [jx, jy]+[x,y]+ j^{-1}\pi(y) jx - j^{-1}\pi(x) jy
$$ 
so that $N_E\equiv 0$ is equivalent to $\kk$ abelian and  $j$ is a $1$-cocycle of $(\hh,\pi)$, which proves the equivalence of (ii) and  (iii).
\end{proof}

\begin{cor} Let $\ct \hh$ denote the tangent Lie algebra via the adjoint representation. If either the almost complex structure $J$ defined by Equation \eqref{defJ} or the almost paracomplex structure $E$ defined in Equation \eqref{defE} is integrable, then $\hh$ is nilpotent.
\end{cor}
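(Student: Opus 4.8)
The plan is to apply Theorem~\ref{intJE} and translate the cocycle condition into a nilpotency statement in the special case $\kk=\hh$ with $\pi=\ad$. By the theorem, integrability of either $J$ or $E$ forces $\kk$ to be abelian (automatic here, since in $\ct\hh$ the second copy of $\hh$ already carries the trivial bracket) and, more importantly, forces the linear isomorphism $j:\hh\to\hh$ to be a $1$-cocycle of $(\hh,\ad)$. So the real content is: \emph{if there exists an invertible $1$-cocycle $j$ of $(\hh,\ad)$, then $\hh$ is nilpotent.}

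First I would write out the cocycle identity \eqref{$1$-cocycle} explicitly in this case: for all $x,y\in\hh$,
$$[x,jy]-[y,jx]-j[x,y]=0,$$
i.e. $j[x,y]=[x,jy]-[y,jx]=[x,jy]+[jx,y]$. The idea is to read this as saying $j$ intertwines $\ad(x)$ with a derivation-like operator, or better, to iterate it. Setting $y$ free and using invertibility of $j$, one gets $[x,y]=j^{-1}[x,jy]+j^{-1}[jx,y]$ for all $x,y$. The cleanest route is probably to show that $\ad(x)$ is nilpotent for every $x\in\hh$ and invoke Engel's theorem. Fix $x\in\hh$ and consider the operator $A=\ad(x)$ together with $B=j^{-1}\circ\ad(x)\circ j$ (conjugate of $\ad(x)$ by $j$). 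The cocycle relation with the first slot equal to $x$ reads, after applying $j^{-1}$, $\ad(x)=j^{-1}\ad(x)j + j^{-1}\ad(jx)\,$; equivalently $A - B = j^{-1}\ad(jx)$.

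The key step I expect is to extract from this a filtration argument. One natural attempt: show that the descending central series of $\hh$ is strictly decreasing until it reaches $0$, using that $j$ is an isomorphism and the cocycle identity pushes brackets ``down'' a level. Concretely, I would try to prove by induction that $j$ maps $\hh^{(k)}$ (the $k$-th term of the lower central series, $\hh^{(1)}=\hh$, $\hh^{(k+1)}=[\hh,\hh^{(k)}]$) in a controlled way, aiming at the conclusion that if $\hh^{(k)}=\hh^{(k+1)}\neq 0$ for some $k$ then one derives a contradiction with invertibility of $j$. Alternatively — and this is likely the slicker argument — recall that $\g=\ct\hh$ is solvable whenever $J$ is integrable (the final remark of the Introduction, via \cite{Au,Mi}, since integrability gives an LSA structure on $\hh$ and hence on $\g$); but more is needed, so I would instead use the following: an invertible $1$-cocycle $j$ for $(\hh,\ad)$ is exactly a \emph{left-symmetric (LSA) structure's transpose} — indeed defining $x\cdot y := j^{-1}[x,jy]$ or a variant of it, the cocycle condition is precisely left-symmetry, and then the classical result of Helmstetter/Segal that a Lie algebra admitting a complete LSA with invertible right multiplications is nilpotent applies. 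The main obstacle is pinning down the exact algebraic identity that makes the induction close: one must be careful that the cocycle condition is symmetric in a way that only yields nilpotency (not merely solvability), and I would verify this by checking the base case and one inductive step of the lower-central-series argument, or by exhibiting explicitly the complete LSA product on $\hh$ and citing the nilpotency criterion. Once $\ad(x)$ is shown nilpotent for all $x$, Engel's theorem finishes the proof.
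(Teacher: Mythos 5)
You have correctly reduced the problem, via Theorem~\ref{intJE}, to the statement: an invertible $1$-cocycle $j$ of $(\hh,\ad)$ forces $\hh$ to be nilpotent. You even write down the decisive identity, $j[x,y]=[x,jy]+[jx,y]$, but then fail to recognize it: this is \emph{precisely} the statement that $j$ is a derivation of $\hh$. The paper's proof is exactly this observation followed by a citation of Jacobson's classical theorem \cite{Ja} that a Lie algebra admitting a non-singular derivation is nilpotent. That is the whole argument; there is nothing left to prove once the identification is made.

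Instead, your proposal branches into three sketches, none of which closes. The attempt to show each $\ad(x)$ is nilpotent from the relation $A-B=j^{-1}\ad(jx)$ (with $B$ a conjugate of $A$) does not obviously go anywhere, and you do not carry it out. The lower-central-series induction is announced but you explicitly concede you cannot ``pin down the exact algebraic identity that makes the induction close''; in effect you would be re-proving Jacobson's theorem, which is not an elementary induction (its standard proof uses weight-space decompositions over an algebraic closure). The third route, via a ``complete LSA with invertible right multiplications,'' is also not established: the product $x\cdot y=j^{-1}[x,jy]$ does give the LSA of Section~5, but you verify neither completeness nor invertibility of the right multiplications, so the Helmstetter/Segal criterion cannot yet be invoked. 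The gap is therefore real but small: replace all three sketches by the single sentence ``the cocycle condition says $j$ is a non-singular derivation of $\hh$, hence $\hh$ is nilpotent by Jacobson's theorem.''
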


\begin{proof} In view of Theorem~\ref{intJE}, the integrability condition for either $J$ or $E$ on $\ct \hh$ (see Example \ref{tan-co}) is given by
$$0= -[y, jx] + [x, jy] - j[x,y]$$
which means that $j$ is a non-singular derivation of $\hh$. And the existence of such a non-singular derivation implies that $\hh$ is nilpotent \cite{Ja}.
\end{proof}

\begin{exa} \label{symplectic} Let $\hh$ denote a Lie algebra with a non-degenerate skew-symmetric bilinear form $\omega$. This $\omega$ induces a linear isomorphism (denoted also by $\omega$) from $\hh  \to \hh^*$ by taking  $\omega(x)(y)=\omega(x,y)$ for $x,y\in \hh$. 
Moreover   $\omega$ gives a symplectic structure on $\hh$ if $d\omega=0$, that is,
$$\omega([x,y],z)+\omega([y,z],x)+\omega([z,x],y)=0\qquad \mbox{ for all }x,y,z\in \hh,$$
which is equivalent to requiring that the map $\omega:\hh\to \hh^*$ is a $1$-cocycle of $(\hh, \ad^*)$. 

Conversely, a non-singular $1$-cocycle $j$ of $(\hh, \ad^*)$ gives a symplectic structure on $\hh$ when $\omega(x,y):= j(x)(y)$ is skew-symmetric. 
\end{exa}

\begin{exa} The cotangent Lie group is naturally equipped with a left-invariant neutral metric.  
A left-invariant  Hermitian structure (for the neutral metric)    on  the  cotangent  Lie  group $T^*G$ is  given  by linear morphisms $J$
of $\ct^*\ggo$, whose  matrix  form  with  respect  to  the  decomposition $\ggo\oplus \ggo^*$ is
$$ 
J=\left( \begin{matrix}  J_1 & J_2\\
J_3 & J_4 
\end{matrix} \right)
$$
satisfying
\begin{enumerate}[(i)]
\item $J_4=-J_1^*$, \quad $J_2=-J_2^*$,\quad $J_3=-J_3^*$;
\vspace{4pt}\item $J_1^2+J_2J_3=-\Id$, \quad $J_1 J_2=-(J_1 J_2)^*,\quad J_3 J_1=-(J_3 J_1)^*$;
\vspace{4pt}\item $J$ is integrable.
\end{enumerate}
A complex structure $I$ on $\ggo$ such that $I^*\alpha=\alpha \circ I$ for  $\alpha\in \ggo^*$ induces $J$ as above on $\ct^*\ggo$, by  taking $J_1=I$, $J_4=-I^*$, $J_2=J_3=0$. 

Let $\omega$ denote a sympletic structure on $\ggo$ and consider also the associated linear isomorphism $\omega:\ggo \to \ggo$ as in Example \ref{symplectic}. Then, $\omega$ induces an Hermitian complex structure on $\ct^*\ggo$, by defining $J(x,\alpha)=(-\omega^{-1}(\alpha), \omega(x))$. 

 Observe that left-invariant Hermitian complex structures on $T^*G$ correspond to left-invariant generalized complex structures on $G$ (see for instance  \cite{ABDF}). 
\end{exa}

Recall that two complex structures $J$ and $J'$ on a given Lie algebra $\ggo$ are called {\em equivalent} if there exists an automorphism $\psi:\ggo \to \ggo$, such that $\psi\circ J'=J\circ \psi$.

Fix the splitting $\ggo=\hh\oplus \kk$ on  the semidirect product Lie algebra $\ggo$ and let $\varphi:\ggo \to \ggo$ be a linear map with matrix
$$\varphi = \left( \begin{matrix}
 A & C \\
B & D
\end{matrix} \right).
$$
Let $\ggo=\hh\oplus \kk$ be as above with associated representation $\pi$, $F$ the canonical product structure associated to this decomposition  as in Equation (\ref{defF})  and $\varphi:\ggo \to \ggo$ denote an automorphism. The following statements are equivalent:
\begin{itemize}
\item the subspaces $\hh$ and $\kk$ are $\varphi$-invariant; 
\item  $\varphi \circ F = F \circ \varphi$.
\end{itemize}

Let $\varphi$ denote now an automorphism of $\ggo=\hh \oplus \kk$ preserving the paracomplex structure $F$. Thus,
\begin{itemize}
\item $A:\hh \to \hh$ and $D:\kk \to \kk$ are automorphisms and
\item $\pi(Ax) \circ D = D \circ \pi(x)$ for every $x\in \hh$.
\end{itemize}

Let $J$ and $J'$ be totally real almost complex structures on $\ggo$ associated to the linear isomorphism $j,j':\hh\to \kk$. Let  $\varphi:\ggo \to \ggo $ denote a linear map of $\ggo$ with matrix as above preserving $F$ and satisfing  $J \circ  \varphi  = \varphi \circ J'$. This occurs   if and only if $Dj'x= jA x$ for all $x\in \hh$.

Observe that if such a $\varphi$ is  an automorphism, we  get an equivalence between $J$ and $J'$ which also preserves $F$. Thus, in such a case we also have an equivalence between the almost complex product structures $(F,J)$ and $(F,J')$. By the same argument, we also get equivalences between $(E,J)$ and $(E',J')$ for $E= -F J$ and $E'= -F J'$.

We now turn our attention to some special classes of almost complex and paracomplex structures.
 An almost complex structure $J$ on a Lie algebra $\ggo$ is said to be
\begin{itemize}
\item[c1)] {\em bi-invariant} if $ [JY,Z]  =  J[Y,Z]$ for all $Y,Z$; 
\vspace{4pt}\item[c2)] {\em abelian} if $[JY,JZ] =  [Y,Z]$ for all $Y,Z$;
\vspace{4pt}\item[c3)] {\em anti bi-invariant} if $[JY,Z]  =  -J[Y,Z]$ for all $Y,Z$.
\end{itemize}
An almost complex structure of type c3) is integrable only when $\ggo$ is abelian, while structures of type c1) and c2) are  always integrable. In fact, direct calculations show that the  bi-invariant condition of $J$ implies that the eigenspaces $\ggo_{\pm i}$ are ideals, while in the case of abelian structures c2) one has abelian subalgebras $\ggo_{\pm i}$.

 In particular, for the almost complex structure $J$ on $\ggo=\hh\oplus \kk$  defined in Equation \eqref{defJ}: 

\begin{enumerate}
\item $J$ bi-invariant yields that for all $x,y \in \hh$:
$J[x,jy]  =  [jx,jy]=0 $, since the left side of the equality belongs to $\hh$ while the right side  belongs to $\kk$.

Thus,  $\kk$ is  abelian and $J\pi(x) jy=0$ for all $x,y \in \hh$. Since $j$ is  non-singular,   it  follows that
\begin{enumerate}[a)]
\item $\pi=0$ and
\item $\hh$ is abelian, since for any $x, y\in \hh$, one also has $J[x, Jy]= [x, J^2y]$. 
\end{enumerate}

Therefore, we have the isomorphism  $\ggo \simeq \RR^m \oplus \RR^m \simeq \RR^{2m}$, the abelian Lie algebra  as a direct sum as Lie algebras of two totally real abelian ideals.

\item $J$  abelian means that for $x, y\in \hh$, we get $[jx, jy]=[x,y]=0$, which says that both $\hh$ and $\kk$ are abelian.  Moreover, we must have $\pi(x)jy=\pi(y) jx$ for all $x,y\in \hh$. Thus, $\ggo\simeq \RR^{m} \oplus \RR^{m}$ is a tangent Lie algebra $\ct_{\pi} \RR^m$ via the representation $\pi$.
\end{enumerate} 

\begin{exa} Let $\hh_{n}$ denote the Heisenberg Lie algebra of dimension 2n+1. It is spanned by the vectors $x_1,\hdots, x_n,y_1,\hdots,y_n,z$ satisfying the non-trivial Lie bracket relations $[x_{i}, y_{i}]=z$ for $i=1, \hdots, n$.

The Lie algebra  $\ggo= \hh_{n} \times \RR$ admits an abelian complex structure $J$, defined by
$$J z =e_{0},  \qquad J x_i=y_i ,  \qquad i=1, \hdots n,$$
where $e_{0}$ spans $\mathbb R$ in $\g=\h_{2n+1} \times \RR$.  It is clear that the abelian Lie subalgebras $\aa_1=span\{x_i, z \}_{i=1}^n$ and $\aa_2=span\{y_i , e_0 \}_{i=1}^n$ are totally real with respect to $J$. 
\end{exa}

An almost paracomplex structure $E$ is said to be 
\begin{itemize}
\item[e1)] {\em bi-invariant} if  $E[Y,Z] = [Y,EZ]$; 
\vspace{4pt}
\item[e2)]  {\em abelian} if $[EY,EZ] = -[Y,Z]$. 
\end{itemize}
Again, each  of these conditions implies the integrability of $E$. 
 
\begin{exa}  Making use of  the definition, it is easily seen that the canonical paracomplex structure  on $\ggo=\hh\oplus \kk$ given by $F(x,v)=(x,-v)$ is bi-invariant. 
\end{exa}

With regard to the almost paracomplex structure $E$ on $\ct_{\pi} \hh$ defined in Equation \eqref{defE}, we can  proceed as in the above case for the complex structure $J$. The results we obtain show that the existence of such abelian or bi-invariant structures on the semidirect product Lie algebra $\ggo$ imposes restrictions on the algebraic  structure of $\ggo$.

\begin{prop}\label{specJE} Let $\ggo=\hh\oplus \kk$ denote a semidirect  Lie algebra  and $J,E$ the almost complex and paracomplex structures defined in Equations \eqref{defJ} and \eqref{defE} respectively. Then:

\begin{itemize}
\item For the bi-invariant case, the following statements are equivalent:

\begin{enumerate}[(i)]
\item the almost complex structure $J$ is bi-invariant; 
\item $\ggo$ is abelian;
\item the almost paracomplex structure $E$ is bi-invariant.
\end{enumerate}

\item For the abelian case,
\begin{enumerate}[(i)] 
\item  $J$ is abelian if and only if $\hh$ and $\kk$ are abelian and  $\pi(x)jy=\pi(y) jx$ for all $x,y\in \hh$.
\item $E$ is abelian if and only if $\hh$ and $\kk$ are abelian and $\pi(x)jy=-\pi(y) jx$ for all $x,y\in \hh$.
\end{enumerate}
\end{itemize}
\end{prop}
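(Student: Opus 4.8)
The plan is to verify each equivalence by a direct computation on the basic brackets \eqref{corchete}, exploiting the block decomposition $\ggo=\hh\oplus\kk$ and the fact that $J\hh=\kk$, $J\kk=\hh$ (and similarly for $E$). I will treat the bi-invariant case first. For the implication (i)$\Rightarrow$(ii), I will feed pairs $x,y\in\hh$ into the condition c1), $J[x,Jy]=[Jx,Jy]$: as sketched in the text preceding the proposition, the left-hand side lies in $\hh$ and the right-hand side, being $[jx,jy]$, lies in $\kk$, forcing both to vanish; non-singularity of $j$ then yields $[\hh,\hh]=0$, $\pi=0$, and $[\kk,\kk]=0$ (the last from the analogous argument with $x\in\kk$), so $\ggo$ is abelian. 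The converse (ii)$\Rightarrow$(i) is immediate since on an abelian Lie algebra every $(1,1)$-tensor is bi-invariant. The same argument run verbatim for $E$, using c1)'s twin e1) $E[Y,Z]=[Y,EZ]$ and $E\hh=\kk$, gives the equivalence (iii)$\Leftrightarrow$(ii).

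For the abelian case, part (i): I will expand c2), $[JY,JZ]=[Y,Z]$, separately on the three types of pairs. For $Y=x$, $Z=y$ in $\hh$ the condition reads $[jx,jy]=[x,y]_\hh$; since the left side is in $\kk$ and the right side in $\hh$, both vanish, so $\hh$ and $\kk$ are abelian. For $Y=x\in\hh$, $Z=v\in\kk$, writing $v=jy$, the condition $[J x, J(jy)] = [x,jy]$ becomes $[jx,-y]=\pi(x)jy$, i.e. (after using $[\kk,\kk]=0$ already established, so the left side is zero) $\pi(x)jy=0$?—here I must be careful: rather, the mixed relation is exactly the symmetry $\pi(x)jy=\pi(y)jx$, which one gets by comparing the $\kk$-components of $N$-type expansions, equivalently by noting $[Jx,Jv]=[x,v]$ with $v=jy$ gives $[jx,-y]=\pi(x)jy$ and the left side $-[jx,y]=-\pi(y)jx$ (using the ideal relation written the other way, with sign from $F$), yielding $-\pi(y)jx=-\pi(x)jy$ after tracking signs. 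Pairs $v,w\in\kk$ give $[0,0]=[\,\cdot,\cdot\,]_\kk$, already covered. Conversely, if $\hh,\kk$ are abelian and $\pi(x)jy=\pi(y)jx$, the same three computations show c2) holds on all pairs by bilinearity. Part (ii) for $E$ is identical except that e2) carries a minus sign, $[EY,EZ]=-[Y,Z]$, and the mixed term produces $\pi(x)jy=-\pi(y)jx$ instead; the diagonal terms still force $\hh$ and $\kk$ abelian.

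The main obstacle I anticipate is purely bookkeeping: keeping the signs straight in the mixed $(\hh,\kk)$ block, since $J$ and $E$ act there by $j^{\pm1}$ with opposite signs and the ideal bracket $[x,v]=\pi(x)v$ must be rewritten as $-[v,x]$ when the roles of the arguments are swapped. A clean way to organize this is to substitute $v=jy$, $w=jz$ throughout so that every expression lives on $\hh$ via $j^{-1}$, reducing each condition to an identity among $\pi(x)jy$, $[x,y]_\hh$, and $[jx,jy]_\kk$; then the stated conclusions read off by matching $\hh$- and $\kk$-components. No deeper input is needed beyond \eqref{corchete}, the definitions \eqref{defJ}, \eqref{defE}, and the non-singularity of $j$; in particular Theorem~\ref{intJE} is not required, though it shows a posteriori that these abelian structures are automatically integrable.
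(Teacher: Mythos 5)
Your overall strategy --- expanding the conditions c1), c2), e1), e2) on the three types of pairs and comparing $\hh$- and $\kk$-components, using only \eqref{corchete}, \eqref{defJ}, \eqref{defE} and the non-singularity of $j$ --- is exactly the paper's route (the paper writes out only the two $J$ cases explicitly and disposes of $E$ with ``we can proceed as in the above case''). The bi-invariant equivalences and the characterization of $J$ abelian go through as you describe, modulo a harmless bookkeeping slip in your mixed term for $J$: you write $-[jx,y]=-\pi(y)jx$, whereas $[jx,y]=-[y,jx]=-\pi(y)jx$, so $-[jx,y]=+\pi(y)jx$; since the resulting identity $\pi(x)jy=\pi(y)jx$ is symmetric, the conclusion is unaffected. (Also, for a pair $v,w\in\kk$ the bracket $[Jv,Jw]$ is $[-j^{-1}v,-j^{-1}w]_{\hh}$, not $[0,0]$; it is covered by the $\hh$-abelian condition, as you say, but not for the reason you wrote.)

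The genuine gap is in the $E$-abelian case. You assert that ``the mixed term produces $\pi(x)jy=-\pi(y)jx$ instead,'' but if you actually run the computation you set up, with the paper's definition e2), namely $[EY,EZ]=-[Y,Z]$, you get for $Y=x\in\hh$, $Z=jy\in\kk$: $[Ex,E(jy)]=[jx,y]=-\pi(y)jx$ and $-[x,jy]=-\pi(x)jy$, so the condition is $\pi(x)jy=\pi(y)jx$ --- the \emph{same symmetric} condition as for $J$, not the antisymmetric one. A concrete check: on $\ggo=\aff(\RR)=\RR e_1\ltimes\RR e_2$ with $[e_1,e_2]=e_2$ and $je_1=e_2$, one has $[Ee_1,Ee_2]=[e_2,e_1]=-e_2=-[e_1,e_2]$, so $E$ is abelian, yet $\pi(e_1)je_1=e_2\neq -\pi(e_1)je_1$. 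The antisymmetric condition would arise only from the convention $[EY,EZ]=+[Y,Z]$. So your proof of abelian-(ii) does not close as written: either rederive the mixed condition honestly (you will obtain the symmetric one) or flag the sign incompatibility between definition e2) and the statement you are asked to prove.
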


\begin{rem} Observe that while the canonical paracomplex structure $F$ on $\ggo=\hh\oplus \kk$ is always bi-invariant, by requiring $E$ to be bi-invariant we get that $\ggo$ is necessarily abelian. 
\end{rem}

\section{A parallel connection for the almost complex  product structure}
 In this section we investigate the existence of a torsion free connection  on a semidirect product Lie algebra $\ggo=\hh\oplus \kk$ such that for the almost complex $J$ and almost paracomplex $E$ it holds  $\nabla E=0$ and  $\nabla J=0$.

Let $\nabla$ denote  any connection on a Lie algebra  $\ggo$. The {\em torsion} of $\nabla$ is defined as
$$ T(X,Y)=\nabla_X Y - \nabla_Y X - [X,Y],$$
so that $\nabla$ is said to be {\em torsion-free} is $ T\equiv 0$. The curvature of $\nabla$ is given by the tensor
$$ R(X,Y)Z=[\nabla_X, \nabla_Y] Z- \nabla_{[X,Y]}Z.$$

\noindent
In particular, we  say that the connection    $\nabla$ is {\em flat} if $R\equiv 0$.

Let $P:\ggo \to \ggo$ denote the product structure associated to a semidirect product Lie algebra  $\ggo= \bb \oplus \cc$, where $\bb$ is a Lie subalgebra and $\cc$ is an ideal of $\ggo$. A general element of $\ggo$ can be written as $x+v$ for $x\in \bb$ and $v\in \cc$. Thus, the product structure $P$ is given by
\begin{equation}\label{PP}
P(x + v)=x - v \qquad \mbox{ for all }\quad x\in \bb, v\in \cc.
\end{equation}

Denote by  $p_{\bb}$ and $p_{\cc}$ the projections onto $\bb$ and $\cc$ respectively with respect to the decomposition $\ggo=\bb \oplus \cc$.

Let $\nabla$ denote a connection of $\ggo$ such that $\nabla P\equiv 0$. Then,  one has
\begin{equation}\label{nablaP}
\nabla_{x+ u} (y- v)=P \nabla_{x+ u} (y+v)\quad \text{for all} \; x,y\in \bb, \; u,v \in \cc,
\end{equation}
that is,
$$p_{\bb} \nabla_{x+u} (y-v)= { p_{\bb}}\nabla_{x+u} (y+v), \qquad 
{ p_{\cc}} \nabla_{x+u} (y-v)=- { p_{\cc}} \nabla_{x+u} (y+v),
$$
which gives
\begin{equation}\label{conK}
p_{\bb} \nabla_{x+u} v=0= p_{\cc}\nabla_{x+u} y.
\end{equation}

\begin{exa} Let  $\ggo=\bb\oplus \cc$ denote a semidirect product Lie algebra, with $\bb$ subalgebra and $\cc$ an ideal. Let $\nabla^1$ be a torsion-free connection on $\bb$, $\nabla^2$ a torsion-free connection on $\cc$ and $\rho:\bb\to \End(\cc)$ be the representation  defined by $[x,v]=\rho(x) v$ for $x\in \bb, v\in \cc$. 
Then, the connection on $\ggo$ given by 
$$\nabla_{x+u} (y+v)=\nabla^1_{x} y + \nabla^2_{u} v +\rho(x) v$$ is torsion-free.
\end{exa}

We shall now prove that the above example gives the conditions for $\nabla P\equiv 0$.

\begin{lem}\label{prop11} Let $P$ denote the product structure associated to  the semidirect product Lie algebra $\ggo = \bb\oplus \cc$ as in (\ref{PP}). If $\nabla$ is a torsion-free connection on $\ggo$ such that  $\nabla P\equiv 0$, then it has the form
\begin{equation}\label{tf-K}
\nabla_{x+u}(y+v) = \nabla^1_{x} y + \nabla^2_{u} v + \rho(x) v,
\end{equation}
where $\nabla^1$ is a torsion-free connection on $\bb$, $\nabla^2$ is a torsion-free connection on $\cc$, and $\rho$ is the representation $\rho:\bb\to \End(\cc)$m associated to the splitting $\ggo=\hh\oplus_{\rho} \kk$.
\end{lem}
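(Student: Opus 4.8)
The plan is to start from an arbitrary torsion-free connection $\nabla$ on $\ggo$ satisfying $\nabla P\equiv 0$ and to extract the three pieces $\nabla^1$, $\nabla^2$, $\rho$ from it, then check that they have the claimed properties and that they reassemble to $\nabla$. The condition $\nabla P\equiv 0$ has already been unwound in \eqref{conK} into the two identities $p_{\bb}\nabla_{x+u}v=0$ and $p_{\cc}\nabla_{x+u}y=0$ for all $x,y\in\bb$, $u,v\in\cc$; this says precisely that $\nabla_{x+u}y\in\bb$ and $\nabla_{x+u}v\in\cc$. So I would expand $\nabla_{x+u}(y+v)$ by additivity in the second slot as $\nabla_{x+u}y+\nabla_{x+u}v$, where the first term lands in $\bb$ and the second in $\cc$, and then expand additivity in the first slot to write $\nabla_{x+u}y=\nabla_x y+\nabla_u y$ and $\nabla_{x+u}v=\nabla_x v+\nabla_u v$.

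Next I would identify each of the four resulting operators. Define $\nabla^1_x y:=\nabla_x y$ (which lies in $\bb$) and $\nabla^2_u v:=\nabla_u v$ (which lies in $\cc$). For $\nabla_x v$: this lies in $\cc$, so I set $\rho(x)v:=\nabla_x v$. For $\nabla_u y$: this lies in $\bb$; here the torsion-free hypothesis enters decisively, since $T(u,y)=\nabla_u y-\nabla_y u-[u,y]=0$ gives $\nabla_u y=\nabla_y u+[u,y]=\rho(y)u+[u,y]$. But $[u,y]=-[y,u]=-\rho(y)u$ because $\cc$ is an ideal and $\rho$ is the representation with $[x,v]=\rho(x)v$; hence $\nabla_u y=0$. (One should note $[u,y]\in\cc$ while we also argued $\nabla_u y\in\bb$ and $\nabla_y u\in\cc$, which is already enough to force $\nabla_u y=0$ directly: the $\bb$-component of the torsion equation reads $\nabla_u y = p_{\bb}(\nabla_y u+[u,y])=0$.) This yields exactly $\nabla_{x+u}(y+v)=\nabla^1_x y+\nabla^2_u v+\rho(x)v$, which is \eqref{tf-K}.

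It then remains to verify the three structural claims. That $\nabla^1$ is a connection on $\bb$ and $\nabla^2$ a connection on $\cc$ follows from restricting the $\RR$-bilinearity and the Leibniz rule of $\nabla$ and using that $\bb$ is a subalgebra and $\cc$ an ideal so the Leibniz factor stays in the right subspace. Torsion-freeness of $\nabla^1$ is immediate: for $x,y\in\bb$, $\nabla^1_x y-\nabla^1_y x=\nabla_x y-\nabla_y x=[x,y]=[x,y]_{\bb}$, which lies in $\bb$. Torsion-freeness of $\nabla^2$ is the analogous computation for $u,v\in\cc$, using $[u,v]=[u,v]_{\cc}$. Finally, $\rho$ must be shown to be the representation associated to the splitting, i.e. $\rho(x)v=[x,v]$: applying $T(x,v)=0$ gives $\nabla_x v-\nabla_v x=[x,v]$, and since $\nabla_v x=\nabla_u y|_{u=v,y=x}=0$ by the step above, we get $\rho(x)v=\nabla_x v=[x,v]=\pi(x)v$, matching \eqref{corchete}. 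I do not expect a genuine obstacle here; the only place demanding care is the bookkeeping of which subspace each component lands in when combining the $\nabla P\equiv 0$ identities with the torsion equation, and making sure the Leibniz rule restricts correctly — in particular that $\rho(x)$ is a derivation-twisted term consistent with $\cc$ being an ideal. All of this is routine linear algebra once the decomposition of $\nabla_{x+u}(y+v)$ into its four pieces is in hand.
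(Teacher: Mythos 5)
Your proof is correct and follows essentially the same route as the paper's: both use the identities from \eqref{conK} to locate each of the four components $\nabla_x y$, $\nabla_x v$, $\nabla_u y$, $\nabla_u v$ in the right subspace, then invoke torsion-freeness componentwise to force $\nabla_u y=0$ and to identify $\nabla_x v$ with $\rho(x)v=[x,v]$ and the diagonal pieces with torsion-free connections on $\bb$ and $\cc$. Your explicit remark that the $\bb$-component of the torsion equation alone forces $\nabla_u y=0$ is exactly the paper's argument, just spelled out more carefully.
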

\begin{proof} By using Equation~(\ref{conK}) and asking the connection $\nabla$ to be torsion-free, one has:
\begin{itemize}
\item $\nabla_x v- \nabla_{v} x =p_{\cc}\nabla_x v - p_{\bb} \nabla_{v} x  = \rho(x) v\quad \text{for all}\; x\in \bb, v\in \cc.$ 

Therefore, $\nabla_{x} v=\pi(x) v\in \cc$ for every $x\in \bb,v\in \cc$ and  $p_{\bb} \nabla_{v} x =0$,  which gives $0= \nabla_{v} x$ for all $x\in\bb, v\in \cc$.

\item $\nabla_x y- \nabla_y x=p_{\bb} \nabla_x y-p_{\bb} \nabla_y x=[x,y]$ for all $x,y\in \bb$, 
that is,  for every $x,y\in\bb$ the bilinear map $(x,y) \to \nabla_x y$ takes values on $\bb$, so it coincides with a torsion-free connection, namely $\nabla^1$ on $\bb$. 
\item Finally, by Equation~(\ref{conK}) one has  $\nabla_{v}w\in \cc$  and this gives a torsion-free connection $\nabla^2$ on $\cc$. 
\end{itemize}
This proves (\ref{tf-K}). 
\end{proof}

Note that the curvature of such a connection is completely described  by
\begin{itemize}
\item $R(x,y)z=R^1(x,y)z$ for all $x,z,z\in\bb$,
\vspace{4pt}\item $R(u,v)w=R^2(u,v)w$ for all $u,v,w\in \cc$,
\vspace{4pt}\item $R(x, v)z=0$ for all $x,z\in \bb, v \in \cc$ and 
\vspace{4pt}\item $R(x,v)w=\pi(x)\nabla^2_{v}w -\nabla^2_{v} \pi(x) w - \nabla^2_{\pi(x)v} w$, for all $x\in \bb, v,w\in \cc$. 
\end{itemize} 

\begin{exa} Let $\ggo=\bb\oplus \cc$ be a semidirect product Lie algebra, where $\cc$ is an ideal with $\dim \bb=\dim\cc$ and assume $\rho:\bb\to \End(\cc)$ is a representation acting by derivations. 
Take a non-singular cocycle of $(\bb,\rho)$. Then the connections on $\bb$ and $\cc$ { respectively given} by
$$\nabla^1_x y=j^{-1} \rho(x) jy\quad \text{and} \quad \nabla^2_v w=\frac12 \ad(v) w \qquad \mbox{for all }x,y\in \bb, v, w\in \cc,$$
 build a torsion-free connection on $\ggo$ such that $\nabla P=0$,  being $P$ the paracomplex structure associated to the splitting $\ggo=\bb\oplus_{\rho} \cc$. 

Indeed, for $\cc$ abelian the connection $\nabla^2$ here would be trivial and the connection $\nabla$ on $\ggo=\bb\oplus_{\rho} \cc$  will be completely determined by $\nabla^1$ and $\rho$. 
\end{exa}

We now investigate the conditions for a connection that parallelizes the almost complex structure $J$ defined in Equation \eqref{defJ}. 
Let $\nabla$ denote  a connection on the semidirect Lie algebra $\ggo=\hh\oplus \kk$, with $\kk$ an ideal such that $\dim \hh=\dim \kk$. By using the definition of $J$, it is easily seen that 
$$
(\nabla _{x_1 + jy_1} J)(x_2 + jy_2)= \nabla_{x_1 + jy_1}(-y_2 + jx_2)- J\nabla _{x_1 + jy_1}(x_2 + jy_2).$$

Hence, we have that $\nabla J=0$ if and only if
\begin{equation}\label{nab1}
p_{\kk}\nabla _{x_1 + jy_1 }(x_2 + jy_2)= - j p_{\hh}\nabla_{x_1 + jy_1}(y_2 - jx_2), \quad \text{for all} \;\, x_i, y_j \in \h. 
\end{equation}
Making use of this and the Lemma \ref{prop11} we {obtain} the  following characterization.

\begin{thm}\label{thm2} Let $J$ denote the almost complex structure on $\ggo=\hh\oplus \kk$ defined in Equation \eqref{defJ} and let $F$ be the paracomplex structure given by $F(x + v)=(x - v)$ for $x,\in \hh, v\in \kk$. 
Then, there exists a torsion-free connection $\nabla$ on $\ggo=\hh\oplus \kk$ such that $\nabla F=0$ and $\nabla J=0$ if and only if $J$ is integrable.

Moreover, such a connection is uniquely determined by 
$$\nabla_{x_1 + j y_1} (x_2 + jy_2)=\tilde{\pi}(x_1)x_2+\pi(x_1)j y_2, \quad \mbox{ with } x_1, x_2, y_1, y_2\in \hh,$$
where $\tilde{\pi}(x_1)\in \End(\hh)$ is given by $\pi(x_1)x_2=j^{-1} \pi(x_1) j x_2$. 
\end{thm}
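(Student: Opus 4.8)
The plan is to prove the two implications separately, and then verify the uniqueness and the explicit formula. For the easy direction, suppose such a torsion-free $\nabla$ exists with $\nabla F=0$ and $\nabla J=0$. Since $\nabla J=0$, the $(\pm i)$-eigendistributions of $J$ are parallel, hence $\nabla$-totally geodesic; but a torsion-free connection for which both eigenspaces of an almost complex structure are parallel has vanishing Nijenhuis tensor, because $N_J(Y,Z)$ can be rewritten (using $\nabla J=0$ and $T=0$) purely in terms of $\nabla$-derivatives that cancel in pairs. Concretely, I would substitute $[Y,Z]=\nabla_YZ-\nabla_ZY$ into \eqref{NijenhuisJ} and use $J\nabla_YZ=\nabla_Y(JZ)$ to collapse the expression to $0$. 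So integrability of $J$ is necessary; note we do not even need $\nabla F=0$ for this half.

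For the converse, assume $J$ is integrable, so by Theorem~\ref{intJE} the ideal $\kk$ is abelian and $j:\hh\to\kk$ is a $1$-cocycle of $(\hh,\pi)$. I would then \emph{define} $\nabla$ by the stated formula
$$\nabla_{x_1 + j y_1} (x_2 + jy_2)=\tilde{\pi}(x_1)x_2+\pi(x_1)j y_2,$$
where $\tilde\pi(x_1)=j^{-1}\pi(x_1)j\in\End(\hh)$, and check three things: (a) $\nabla$ is torsion-free, (b) $\nabla F=0$, (c) $\nabla J=0$. For (b) and (c), observe that by Lemma~\ref{prop11} any torsion-free connection with $\nabla F=0$ has the block form $\nabla_{x+u}(y+v)=\nabla^1_xy+\nabla^2_uv+\pi(x)v$; here $\nabla^2$ is forced to be the (trivial, since $\kk$ abelian) torsion-free connection on $\kk$, and $\nabla^1_{x_1}x_2:=\tilde\pi(x_1)x_2$ must be a torsion-free connection on $\hh$. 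Torsion-freeness of $\nabla^1$ is exactly $\tilde\pi(x_1)x_2-\tilde\pi(x_2)x_1=[x_1,x_2]_\hh$, i.e.\ $j^{-1}\pi(x_1)jx_2-j^{-1}\pi(x_2)jx_1=[x_1,x_2]_\hh$, which is precisely the $1$-cocycle condition \eqref{$1$-cocycle} for $j$ — so this is where integrability is used. Then $\nabla F=0$ holds because the formula has exactly the shape \eqref{tf-K}, and $\nabla J=0$ follows by plugging the formula into condition \eqref{nab1} and simplifying, using $\kk$ abelian once more.

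For uniqueness: any torsion-free $\nabla$ with $\nabla F=0$ and $\nabla J=0$ satisfies \eqref{tf-K} \emph{and} \eqref{nab1}. The $\kk$-component of \eqref{tf-K} is $\nabla^2_{jy_1}(jy_2)$, while \eqref{nab1} pins $p_\kk\nabla_{x_1+jy_1}(x_2+jy_2)=-jp_\hh\nabla_{x_1+jy_1}(y_2-jx_2)$; combining these (and the already-established fact that mixed terms like $\nabla_{jy}x$ vanish) forces $\nabla^2=0$ and forces $\nabla^1_{x_1}x_2=j^{-1}\pi(x_1)jx_2$, giving the stated formula. The main obstacle I expect is bookkeeping: keeping the four sub-blocks of $\nabla$ straight and making sure condition \eqref{nab1} is applied with the correct pairing between the $\hh$- and $\kk$-slots — the underlying identities are all either the cocycle condition or "$\kk$ is abelian," but matching them up componentwise requires care.
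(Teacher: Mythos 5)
Your proposal is correct, and for the converse and the uniqueness statement it follows essentially the same route as the paper: invoke Lemma~\ref{prop11} to reduce any torsion-free connection with $\nabla F=0$ to the block form \eqref{tf-K}, feed that into condition \eqref{nab1}, set $x_1=0$ to kill $\nabla^2$, read off $\nabla^1_{x_1}x_2=j^{-1}\pi(x_1)jx_2$, and observe that torsion-freeness of $\nabla^1$ and $\nabla^2$ is exactly the cocycle condition for $j$ together with $\kk$ abelian, i.e.\ integrability via Theorem~\ref{intJE}. The one genuine difference is your treatment of the necessity direction: you derive $N_J\equiv 0$ directly from the general identity obtained by substituting $[Y,Z]=\nabla_YZ-\nabla_ZY$ into \eqref{NijenhuisJ} and commuting $J$ past $\nabla$, which is a correct and standard computation (all eight terms cancel in pairs), and it has the small bonus of showing that $\nabla F=0$ plays no role in that implication. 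The paper instead extracts necessity from the same explicit componentwise analysis it needs anyway for uniqueness, so it gets both at once; your version is slightly more modular but ends up doing the componentwise bookkeeping regardless. Either way the argument closes, and the key identities you flag (the cocycle condition for torsion-freeness of $\nabla^1$, and $\kk$ abelian for torsion-freeness of the trivial $\nabla^2$) are exactly the ones the paper uses.
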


\begin{proof} Let $\nabla$ denote a torsion-free connection on $\ggo=\hh\oplus \kk$, such that $\nabla F=\nabla J=0$. Thus, the connection   $\nabla$ satisfies the conditions listed in Lemma \ref{prop11}, which  together with  Equation~\eqref{nab1} gives
$$
\begin{array}{rcl}
\nabla_{x_1+jy_1}(-y_2+jx_2) & = & - \nabla^1_{x_1} y_2+ \pi(x_1) jx_2+\nabla^2_{jy_1}jx_2\\[4pt]
& = & J (\nabla^1_{x_1} x_2  +\pi(x_1) jy_2 + \nabla^2_{jy_1}jy_2), 
\end{array}
$$
for the connections $\nabla^1:\hh \times \hh \to \hh$ and $\nabla^2:\kk \times \kk \to \kk$. 
The components on  $\hh$ and $\kk$ respectively of such equations give, for all $x_1, x_2, y_1, y_2\in\hh$:
\begin{itemize}
\item[(a)] on $\hh$: $-\nabla^1_{x_1} y_2=-j^{-1}\pi(x_1) jy_2 -j^{-1}\nabla^2_{jy_1}jy_2$, 
\vspace{4pt}\item[(b)] on $\kk$: $\pi(x_1) jx_2+\nabla^2_{jy_1}jx_2=j \nabla^1_{x_1}x_2$.
\end{itemize}
Observe that since these equations must hold  for arbitrary elements, taking $x_1=0$  we obtain $\nabla^2\equiv 0$. Thus, this implies the following:
\begin{itemize}
\item from (b): $\nabla^1_{x_1}x_2= j^{-1} \pi(x_1) jx_2$;
\vspace{4pt}\item by asking $\nabla^2$ to be  torsion-free, we get that $\kk$ should be abelian. 
\end{itemize}

 For the connection $\nabla^1$, the condition of being  torsion-free is equivalent to the fact that the identity map is a $1$-cocycle of $(\hh, \tilde{\pi})$. In fact, this follows by applying $j^{-1}$ to the equation for $j$ to be a $1$-cocycle of $(\hh, \pi)$. So,  $J$ is integrable.

Conversely, by Theorem~\ref{intJE} the integrability of  $J$ is equivalent to  $\kk$ is abelian and $j$ is a non-singular isomorphism which is a $1$-cocycle of  $(\hh, \pi)$. Thus, the connection $\nabla$ defined by
\begin{equation} \label{tf-KJ}
\nabla_{x_1+jy_1}(x_2+jy_2) = \tilde{\pi}(x_1)  x_2 + \pi(x_1) jy_2,
\end{equation}
where $\tilde{\pi}(x_1):= j^{-1} \pi(x_1)j$, is an equivalent representation of $\pi$ (and also a torsion-free connection on $\hh$), is torsion-free and it satisfies $\nabla F=\nabla J=0$. 
\end{proof}

\begin{rem} Under the assumptions of the above Theorem, since $\pi$ is a representation one gets

$$0=\pi(x)\pi(y) jz-\pi(y)\pi(x)jz-\pi([x,y])jz=j\nabla^1_x\nabla^1_y z- j\nabla^1_y\nabla^1_y z-j\nabla^1_{[x,y]} z,$$

that is the connection  $\nabla^1$ on $\hh$ is flat. Since  $\nabla^2\equiv 0$, one concludes that $\nabla$ is flat. 
\end{rem}

Observe that since $F=E J$ (equivalently, $E=-FJ$, $J=EF$),  if a given connection $\nabla$ parallelizes any pair of structures among $J,E,F$,  it necessarily parallelizes also the remaining one. In particular, { we  see that} for any linear isomorphism $j:\h \to \kk$ we have several statements which are equivalent to  Theorem~\ref{thm2} for the almost complex structure $J$ and the almost paracomplex structures $E$ already defined, whose proof simply follows from the relations among $J,E,F$. 

\vspace{8pt}

{\bf Theorem \ref{thm2}.'} Let $\ggo=\hh\oplus\kk$ denote a semidirect product Lie algebra and let $j:\h \to \kk$ be any linear isomorphism.  Then, the following statements are equivalent:
\begin{itemize}
\item There { exists a torsion-free} connection $\nabla$ on $\ggo$, such that $\nabla F=0$ and $\nabla J=0$;
\item There { exists a torsion-free} connection $\nabla$ on $\ggo$, such that $\nabla F=0$ and $\nabla E=0$ ; 
\item There { exists a torsion-free} connection $\nabla$ on $\ggo$, such that $\nabla E=0$ and $\nabla J=0$;
\item $J$  is integrable; 
\item $E$ is integrable.
\end{itemize}
In any case,   such a connection is unique and flat. See Theorem \ref{thm2}.

\begin{rem}
As proved in \cite{AS}, for a {\em complex product structure} $\{J,E\}$ on a Lie algebra there exists a unique torsion-free connection which
makes both $J$ and $E$ parallel. The above theorem extends this result to the wide class of 
{\em almost} complex product structures  on a semidirect product Lie algebra $\ggo=\hh\oplus \kk$, defined by any linear isomorphism $j: \h \to \kk$. We do not ask $J$ to be integrable,  while $F$ is integrable with one of the eigenspaces as an ideal.
\end{rem}

\section{Affine structures and split isomorphisms}

In this section we study the connection of the previous section as an LSA structure. We start by recalling some generalities of affine structures. 

Let $V$ denote a real vector space. A {\em left-symmetric algebra} structure on $V$ ({\em LSA} for short)  consists of a bilinear map operation $\cdot:V \times V\to V$, whose associator 
$$a(x,y,z)=(x\cdot y)\cdot z- x\cdot (y\cdot z)$$
 satisfies
$a(x,y,z)=a(y,x,z)$. 

LSA are also known as pre-Lie algebras or Koszul-Vinberg algebras.  
An LSA gives rise to a Lie algebra structure on $V$, where the  Lie bracket is  defined by

\begin{equation}\label{bracketA}
[x,y]_L:= x\cdot y-y\cdot x\qquad \mbox{for all }\; x,y\in V.
\end{equation}
We shall refer to this as to the Lie algebra underlying the LSA $(V,\cdot)$. An LSA on the vector space $V$ with associator $a\equiv 0$ is nothing but an associative algebra structure on $V$. 
On the other hand, it is known that Equation~\eqref{bracketA} is the usual way to define a Lie algebra bracket on any associative algebra.

\begin{prop}{\bf \cite{Bu1,DM}} \label{p100}
There is a canonical one-to-one correspondence between the following classes of objects, up to suitable equivalence:
\begin{itemize}
\item $\{${\it 
Left-invariant affine structures on} $G\}$;
\vspace{4pt}\item $\{${\it 
Affine structures on the Lie algebra} $\ggo \}$;
\vspace{4pt}\item $\{$LSA structures on $\ggo \}$. 
\end{itemize}
\end{prop}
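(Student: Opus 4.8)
The statement to be proved is Proposition~\ref{p100}, the
three-way correspondence between left-invariant affine structures on a Lie group
$G$, affine structures on its Lie algebra $\ggo$, and LSA structures on $\ggo$. My
plan is to establish the two equivalences separately: first the dictionary between
left-invariant flat torsion-free connections on $G$ and flat torsion-free
connections on $\ggo$ (which is essentially a translation via left-invariant vector
fields), and then the dictionary between the latter and LSA structures on $\ggo$.

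For the first equivalence, I would start from a left-invariant affine connection
$\nabla$ on $G$, i.e.\ a flat torsion-free connection invariant under all left
translations $L_g$. Since $\nabla$ is left-invariant, for $X,Y$ left-invariant
vector fields the field $\nabla_X Y$ is again left-invariant, so $\nabla$ is
completely determined by the bilinear map $\ggo\times\ggo\to\ggo$,
$(X,Y)\mapsto(\nabla_X Y)_e$, which I will also call $\nabla$. Torsion-freeness of
the connection becomes the identity $\nabla_X Y-\nabla_Y X=[X,Y]$ on $\ggo$, and
flatness becomes $\nabla_X\nabla_Y Z-\nabla_Y\nabla_X Z=\nabla_{[X,Y]}Z$; these are
exactly what one means by an \emph{affine structure on the Lie algebra} $\ggo$.
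Conversely, any such bilinear map on $\ggo$ extends by left translation to a
left-invariant connection on $G$ whose torsion and curvature vanish because they do
so on the left-invariant frame. This sets up the first bijection, and equivalence of
the two sides under the natural notion of equivalence (affine diffeomorphisms on one
side, Lie algebra automorphisms intertwining the products on the other) follows by
differentiating at the identity.

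For the second equivalence, given an affine structure $\nabla$ on $\ggo$ I would
define $x\cdot y:=\nabla_x y$. The torsion-free condition $\nabla_x y-\nabla_y
x=[x,y]$ is precisely Equation~\eqref{bracketA}, saying that $[\cdot,\cdot]$ is the
commutator of the product $\cdot$\,; and a direct computation identifies the
associator symmetry $a(x,y,z)=a(y,x,z)$ with the flatness identity
$R(x,y)z=0$. Explicitly, expanding $a(x,y,z)-a(y,x,z)$ and using the torsion-free
relation to replace $x\cdot y-y\cdot x$ by $[x,y]$ yields
$R(x,y)z=\nabla_x\nabla_y z-\nabla_y\nabla_x z-\nabla_{[x,y]}z$, so the two
conditions coincide. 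Hence affine structures on $\ggo$ are the same as LSA
structures on $\ggo$, and the underlying Lie bracket of the LSA recovers the bracket
of $\ggo$.

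The routine parts here are the bookkeeping verifications (that the extended
connection is genuinely torsion-free and flat, and the algebraic manipulation
turning the associator symmetry into vanishing curvature). The one genuine point of
care, and what I'd flag as the main obstacle, is matching up the notions of
\emph{equivalence} on the three sides so that the correspondence is canonical and
not merely a set-theoretic bijection: on $G$ one identifies connections related by
an affine diffeomorphism fixing the identity (equivalently, by an inner
automorphism), on $\ggo$ one uses Lie algebra automorphisms, and on the LSA side one
uses algebra isomorphisms whose induced map on commutators is such an automorphism.
Checking that these three equivalence relations correspond under the bijections
above is the substantive content; everything else is a direct translation. Since the
precise statement and proof appear in \cite{Bu1,DM}, I would at this point simply
refer to those sources for the full details of the equivalence-preserving part,
having exhibited the underlying dictionaries explicitly.
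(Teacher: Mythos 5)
Your proposal is correct and follows essentially the same route the paper takes: the paper states this proposition with a citation to \cite{Bu1,DM} rather than proving it in full, but the text immediately following it carries out exactly your second dictionary (setting $x\cdot y=\nabla_x y$, identifying torsion-freeness with the commutator condition \eqref{eqN1} and the associator symmetry with flatness via \eqref{eqN2}), while the first dictionary is the standard left-invariant-vector-field translation you describe. Your explicit remark that matching the equivalence relations on the three sides is the substantive content deferred to the references is an accurate reading of where the real work lies.
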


An {\em affine structure} on a Lie algebra $ \ggo$ is a connection $\nabla:\ggo \times \ggo \to \ggo$ which is torsion-free and flat. Such  connection gives rise to a left-invariant affine connection on the corresponding Lie group $G$. See more about LSA or affine structures in \cite{AM,Bu1,Bu,Bu2, Se} for instance.

\begin{exa}  Whenever $\kk$ is abelian, the connection $\nabla$ of Theorem \ref{thm2} gives an affine structure on the semidirect product  Lie algebra $\ggo=\hh\oplus \kk$.
\end{exa} 

Let $\nabla$ denote an affine structure on a Lie algebra $\ggo$. By defining 
$$x\cdot y=\nabla_x y$$
one gets an LSA structure $\cdot$ on $\ggo$, such that the Lie bracket induced by the binary operation coincides with the original Lie bracket on $\ggo$. In fact,
$$a(x,y,z)= \nabla_{\nabla_x y}z- \nabla_x \nabla_y z ,$$
so that $a(x,y,z)=a(y,x,z)$ yields the following equality:
$$\nabla_{\nabla_x y}z- \nabla_x \nabla_y z= \nabla_{\nabla_y x}z- \nabla_y \nabla_x z. 
$$
On the other hand, since $\nabla$ is torsion-free, one has
$$\nabla_{\nabla_x y}z-\nabla_{\nabla_y x}z = \nabla_{[x,y]} z= \nabla_x \nabla_y z - \nabla_y \nabla_x z
$$
where the second equality follows from the flatness condition of $\nabla$. Thus, the bilinear map $\cdot:\ggo \times \ggo \to \ggo$ gives an LSA structure if and only if it satisfies
\begin{align}
&{[x,y]}=  x \cdot y - y \cdot x, \label{eqN1} \\[2pt]
&{[x,y]}\cdot z   =   (x\cdot y) \cdot z - ( y \cdot x) \cdot z \label{eqN2}
\end{align}
for all $x,y,z\in \ggo$. 

\begin{exa} \label{con-sim} Since the end of 60's-the early 70's (see for example \cite{Chu}), it is well known that if $\omega$ is a symplectic
structure on a Lie algebra $\ggo$, the formula
$$\omega(\nabla_x y, z)=\omega(y, [x,z])$$
defines an affine structure on $\ggo$.
\end{exa}

 Consider a left-symmetric algebra $\cdot$ on a real vector space $V$ and the Lie algebra $\ggo$ underlying $(V,\cdot)$. The left-multiplication $L$ in $V$ is given by $L(x) y= x\cdot y$,  while the right multiplication is defined analogously by $R(x) y= y\cdot x$. The two conditions above then yield that
\begin{align} 
&{L:\ggo \to \End(\ggo)}   \; \mbox{ is a Lie algebra homomorphism}, \label{eqN21}\\[2pt]
&Id:\ggo\to\ggo  \; \mbox{ is a $1$-cocycle of }(\ggo, L). \label{eqN22} 
\end{align}

We now consider $\aff(\ggo)= \End(\ggo) \oplus\ggo$ with the Lie bracket $[(T, x), (S,y)]= (T S - ST, T y - Sx)$. Let $\alpha$ be the linear map  $\alpha:\ggo \to \aff(\ggo)$ given by
$$\alpha(X)= (L(X),X).$$

\begin{prop} \label{propa} The linear map $L$ satisfies \eqref{eqN21} and \eqref{eqN22} if and only if $\alpha$ is a Lie algebra homomorphism. 
\end{prop}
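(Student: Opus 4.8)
The plan is to unwind both conditions into explicit identities and compare them with the definition of a Lie algebra homomorphism $\alpha\colon\ggo\to\aff(\ggo)$. First I would fix $x,y\in\ggo$ and compute the bracket in $\aff(\ggo)$ of $\alpha(x)=(L(x),x)$ and $\alpha(y)=(L(y),y)$ using the given formula
$$[(T,x),(S,y)]=(TS-ST,\,Ty-Sx),$$
obtaining
$$[\alpha(x),\alpha(y)]=\bigl(L(x)L(y)-L(y)L(x),\ L(x)y-L(y)x\bigr).$$
On the other hand $\alpha([x,y])=\bigl(L([x,y]),[x,y]\bigr)$. So $\alpha$ is a Lie algebra homomorphism if and only if the two components agree, i.e.
$$L([x,y])=L(x)L(y)-L(y)L(x)\qquad\text{and}\qquad [x,y]=L(x)y-L(y)x$$
for all $x,y\in\ggo$.

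Next I would observe that the first of these two equalities is literally the statement that $L\colon\ggo\to\End(\ggo)$ is a Lie algebra homomorphism, which is \eqref{eqN21}. For the second, I would recall that the $1$-cocycle condition \eqref{$1$-cocycle} for the identity map with respect to $(\ggo,L)$ reads $L(x)\,\Id(y)-L(y)\,\Id(x)-\Id([x,y])=0$, that is $L(x)y-L(y)x=[x,y]$, which is exactly \eqref{eqN22}. Here I would also note, if desired, that $L(x)y-L(y)x=x\cdot y-y\cdot x$, so this condition is just \eqref{eqN1}, consistent with the earlier discussion; but the cleanest route is to match it directly with the cocycle equation.

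Finally I would assemble the equivalence: the conjunction of \eqref{eqN21} and \eqref{eqN22} holds if and only if \emph{both} components of $[\alpha(x),\alpha(y)]$ and $\alpha([x,y])$ coincide for all $x,y$, which is precisely the assertion that $\alpha$ is a Lie algebra homomorphism. One should also remark that $\alpha$ is automatically linear (being the direct sum of the linear maps $L$ and $\Id$) and injective (its second component is $\Id$), so no further checking is needed for it to be a well-defined homomorphism of Lie algebras onto its image. There is essentially no obstacle here: the only thing to be careful about is bookkeeping the order of the operators in the bracket of $\aff(\ggo)$ and making sure the sign in the vector component matches the cocycle convention \eqref{$1$-cocycle}; once the bracket in $\aff(\ggo)$ is written out, the proof is a direct term-by-term comparison.
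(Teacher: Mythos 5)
Your proof is correct: expanding $[\alpha(x),\alpha(y)]=(L(x)L(y)-L(y)L(x),\,L(x)y-L(y)x)$ and comparing componentwise with $\alpha([x,y])=(L([x,y]),[x,y])$ identifies the two components exactly with \eqref{eqN21} and \eqref{eqN22} (the latter via the cocycle equation \eqref{$1$-cocycle} with $\theta=\Id$ and $\pi=L$). The paper states this proposition without proof, and your direct term-by-term verification is precisely the intended argument.
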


Moreover, we recall the following result proved in \cite{Bu2}. 

\begin{lem} Let $(V,\cdot)$ be an LSA structure with underlying Lie algebra $\ggo$. Then, $\ggo$ is abelian if and only if $(V,\cdot)$ is associative and commutative.
\end{lem}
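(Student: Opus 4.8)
The plan is to prove the two implications directly from the defining identities, starting with the easy one. If $(V,\cdot)$ is associative and commutative, then commutativity alone gives $[x,y]_L = x\cdot y - y\cdot x = 0$ for all $x,y$ by the definition \eqref{bracketA} of the underlying bracket, so $\ggo$ is abelian; associativity is not needed for this direction.

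For the converse I would assume $\ggo$ abelian and first extract commutativity of the product: since $x\cdot y - y\cdot x = [x,y]_L = 0$ for all $x,y\in V$, the operation $\cdot$ is commutative. It then remains to deduce associativity, and here I would feed commutativity into the left-symmetry axiom $a(x,y,z)=a(y,x,z)$, that is, $(x\cdot y)\cdot z - x\cdot(y\cdot z) = (y\cdot x)\cdot z - y\cdot(x\cdot z)$. Because $x\cdot y = y\cdot x$, the leading terms of the two sides coincide, so the identity collapses to $x\cdot(y\cdot z) = y\cdot(x\cdot z)$ for all $x,y,z\in V$; equivalently, the left multiplications commute, $L(x)L(y) = L(y)L(x)$.

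Associativity now follows from a short chain alternating commutativity of $\cdot$ with this collapsed identity: $(x\cdot y)\cdot z = z\cdot(x\cdot y) = x\cdot(z\cdot y) = x\cdot(y\cdot z)$, where the first and last equalities are commutativity of $\cdot$ and the middle one is the relation $L(z)L(x)=L(x)L(z)$ applied to $y$. Hence $(V,\cdot)$ is both associative and commutative, which completes the proof. I do not expect any genuine obstacle here; the only point requiring care is the bookkeeping of which variables are permuted when the collapsed identity is invoked, and checking that the argument is symmetric enough that no superfluous hypothesis is used.
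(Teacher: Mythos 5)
Your proof is correct and complete; note that the paper itself gives no proof of this lemma, only a citation to Burde's survey, and your argument is the standard one. Both directions are handled cleanly: commutativity of $\cdot$ is immediate from $[x,y]_L=0$, and feeding it into the left-symmetry identity $a(x,y,z)=a(y,x,z)$ correctly collapses to $L(x)L(y)=L(y)L(x)$, after which the chain $(x\cdot y)\cdot z = z\cdot(x\cdot y)=x\cdot(z\cdot y)=x\cdot(y\cdot z)$ yields associativity with no gaps.
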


 \begin{rem} As we already mentioned in the Introduction, LSA structures are related to affine transformations.  Milnor proved in \cite{Mi} that a connected Lie group acts freely by affine transformations of some $\RR^n$ if and only if it is simply connected and solvable. He stated the following question \cite{Mi}.

\vspace{3pt}

{\bf Milnor's Question.} {\it Does every solvable Lie group
$G$ admit a complete left-invariant affine structure, or equivalently, does the universal covering  group $\tilde{G}$ 
operate simply transitively
by affine transformations of $\RR^k$?}

\vspace{3pt}

 In the conditions of this question,  Auslander \cite{Au} proved that  $G$ is solvable.

Recall that the group of affine transformations of $\RR^k$, denoted by $\Aff(\RR^k)$, is given by 
$$\Aff(\RR^k)=\left\{ \left(\begin{matrix} A & b \\
0 & 1 \end{matrix} \right) \quad A\in \GL(\RR^k), b\in \RR^k\right\}
$$
and acts on a vector {\small${\left(\begin{matrix} v \\ 1 \end{matrix} \right)}$}$\in \RR^k$ by
$$\left(\begin{matrix} A & b \\
0 & 1 \end{matrix} \right) \left(\begin{matrix} v \\ 1 \end{matrix} \right)  = \left(\begin{matrix} A v +b \\
1 \end{matrix} \right).
$$
Observe that the Lie algebra of $\Aff(\RR^k)$ is $\aff(\RR^k)\subset \End(\RR^k)$.
 
\end{rem}

 If we start from a Lie algebra $(\hh, [\,,\,])$, an LSA structure $\cdot$ on $\h$ is said to be {\em compatible} if the Lie bracket defined by $\cdot$, as described in \eqref{bracketA},  coincides with the Lie bracket on $\hh$: $[\,,\,]=[\,,\,]_L$.

\begin{exa} \label{exA} An LSA structure $\cdot$ on a real vector space $V$ gives rise to a semidirect product Lie algebra in the following way. Let $\hh$ denote the Lie algebra underlying $(V,\cdot)$. We take the direct sum of vector spaces $\ggo = \hh \oplus V$ with the Lie bracket
$$[(x, u),(y, v)]= ([x,y]_L,  \theta L(x) \theta^{-1} v- \theta L(y)\theta^{-1} u ),
$$
where $[x,y]_L$ is given by \eqref{bracketA}  and  $\theta:\hh \to V$ is a linear bijection. So, $\tilde{L}(x) (v):= \theta L(x) \theta^{-1}(v)$ induces an equivalent representation of $\hh$ to $\End(V)$. It is clear that $\hh$ and $V$ are subalgebras of $\ggo$, $V$ is an abelian ideal of $\ggo$ and  
$\theta:\hh \to V$ is a linear isomorphism which is a $1$-cocycle of $(\hh, \tilde{L})$. 
\end{exa}

\begin{prop} Let $\cdot$ denote an LSA structure on a vector space $V$ and $\hh$ the underlying Lie algebra. Then the canonical inclusion $\iota:\hh \to \ggo=\hh \oplus V$ is a Lie algebra monomorphism, where $\ggo$ is equipped with the Lie bracket of {\em 
Example~\ref{exA}}.
\end{prop}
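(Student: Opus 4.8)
The plan is to show that the canonical inclusion $\iota:\hh\to\ggo=\hh\oplus V$, $\iota(x)=(x,0)$, is a Lie algebra homomorphism, and then note injectivity is immediate since $\iota$ is linear with trivial kernel (as a map onto the first summand of a direct sum of vector spaces). The only real content is the bracket-preserving property.

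First I would simply plug into the bracket of Example~\ref{exA}. For $x,y\in\hh$ we compute
$$[\iota(x),\iota(y)]=[(x,0),(y,0)]=\bigl([x,y]_L,\ \tilde L(x)\,0-\tilde L(y)\,0\bigr)=\bigl([x,y]_L,0\bigr)=\iota([x,y]_L).$$
Then I invoke the standing hypothesis of the section: the LSA structure $\cdot$ on $V$ is assumed to give a Lie algebra structure on $V$ via $[x,y]_L=x\cdot y-y\cdot x$, and $\hh$ is by definition the Lie algebra underlying $(V,\cdot)$, so $[x,y]_L$ \emph{is} the bracket of $\hh$. Hence $[\iota(x),\iota(y)]=\iota([x,y]_{\hh})$, which is exactly the homomorphism property. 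Injectivity: if $\iota(x)=(x,0)=(0,0)$ then $x=0$, so $\Ker\iota=\{0\}$ and $\iota$ is a monomorphism.

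I would also remark — for completeness, though it is not strictly needed for the statement — that one should check $\hh\times\{0\}$ is actually a subalgebra of $\ggo$, which the computation above already establishes, and that the bracket in Example~\ref{exA} does satisfy the Jacobi identity; the latter is guaranteed by the general construction of a semidirect product Lie algebra recalled in Section~2, since $\tilde L:\hh\to\End(V)$ is a representation (being equivalent, via the linear bijection $\theta$, to the left-multiplication representation $L$, which is a Lie algebra homomorphism by \eqref{eqN21}) and it acts by derivations on the abelian ideal $V$ trivially in the sense that every endomorphism is a derivation of the zero bracket.

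There is essentially no obstacle here: the proof is a one-line verification. The only point requiring the slightest care is bookkeeping — making sure the ordered pairs are written in the order $(\text{element of }\hh,\ \text{element of }V)$ matching Example~\ref{exA}, and recognizing that ``$[x,y]_L$ equals the bracket of $\hh$'' is true by the very \emph{definition} of $\hh$ as the Lie algebra underlying $(V,\cdot)$, rather than something needing proof. So the writeup is short: state $\iota(x)=(x,0)$, do the two-line bracket computation, observe $\Ker\iota=0$, done.
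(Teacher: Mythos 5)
Your proof is correct and is exactly the direct verification the paper leaves implicit (the proposition is stated there without proof): the bracket of Example~\ref{exA} restricted to $\hh\times\{0\}$ returns $([x,y]_L,0)$, which is $\iota$ of the bracket of $\hh$ by the very definition of the underlying Lie algebra, and injectivity is immediate. Nothing is missing.
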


 Observe that in the context above we have an almost complex {structure} and an almost paracomplex structure on $\ggo=\hh \oplus V$ given respectively by
$$J(x,v)=(-\theta^{-1} v, \theta x) \qquad \qquad E(x,v)=(\theta^{-1} v, \theta x).$$
The natural question { concerning the converse construction is answered by the following result.} 

\begin{thm} Let $\ggo=\hh\oplus \kk$ denote a semidirect product Lie algebra attached to $(\hh,\pi)$. Let $j : \hh\to \kk$ denote a linear isomorphism. Then, the following statements are equivalent:
\begin{enumerate}[(i)]
\vspace{2pt}\item $\nabla_x y:= j^{-1}\pi(x) jy$ is an affine connection on $\hh$;
\vspace{4pt}\item the almost complex structure $J$ defined by $J(x,v)=(-j^{-1}v, jx)$ is integrable. 
\vspace{4pt}\item the almost para-complex structure $E$ defined by $J(x,v)=(j^{-1}v, jx)$ is integrable.
\end{enumerate}
\end{thm}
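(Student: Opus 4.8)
The plan is to prove the chain $(i)\Leftrightarrow(ii)\Leftrightarrow(iii)$ by reducing everything to the two algebraic conditions already isolated in Theorem~\ref{intJE}: namely that $\kk$ is abelian and that $j:\hh\to\kk$ is a $1$-cocycle of $(\hh,\pi)$. The equivalence $(ii)\Leftrightarrow(iii)$ is already Theorem~\ref{intJE}, so the only new work is $(i)\Leftrightarrow(ii)$ (together with $(iii)$ as a bridge). I would phrase the argument as: $(i)$ holds if and only if $\kk$ is abelian and $j$ is a $1$-cocycle of $(\hh,\pi)$, which by Theorem~\ref{intJE} is exactly $(ii)$ (and $(iii)$).

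First I would spell out what condition $(i)$ means. Set $\nabla_x y := j^{-1}\pi(x) jy$ and let $\tilde\pi(x):=j^{-1}\pi(x)j\in\End(\hh)$, so $\nabla_x y=\tilde\pi(x)y$ and $\tilde\pi$ is the representation equivalent to $\pi$ via $j$. An affine structure is a torsion-free, flat connection. Flatness is automatic: since $\pi$ is a Lie algebra representation, conjugation by $j$ shows $\tilde\pi$ is too, i.e. $[\tilde\pi(x),\tilde\pi(y)]=\tilde\pi([x,y]_{\hh})$, which is precisely $R^{\nabla}\equiv 0$ for $\nabla_x y=\tilde\pi(x)y$ (this is the computation already recorded in the Remark after Theorem~\ref{thm2}). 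So $(i)$ is equivalent to $\nabla$ being torsion-free, i.e.
\[
\tilde\pi(x)y-\tilde\pi(y)x=[x,y]_{\hh}\qquad\text{for all }x,y\in\hh.
\]
Applying $j$ to both sides, this says $\pi(x)jy-\pi(y)jx=j[x,y]_{\hh}$, which is exactly the statement that $j$ is a $1$-cocycle of $(\hh,\pi)$ (Equation~\eqref{$1$-cocycle}). Thus $(i)$ holds if and only if $j$ is a $1$-cocycle of $(\hh,\pi)$.

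The remaining subtlety is that $(i)$, as just analyzed, is equivalent to \emph{only} the cocycle condition on $j$, whereas $(ii)$ and $(iii)$ also require $\kk$ abelian. To close the loop I would invoke Theorem~\ref{intJE}: I have shown $(i)$ is equivalent to ``$j$ is a $1$-cocycle of $(\hh,\pi)$''. But in Theorem~\ref{intJE} it is shown that $(ii)$ (and $(iii)$) are equivalent to ``$\kk$ abelian \emph{and} $j$ a $1$-cocycle''. So strictly speaking $(i)$ appears \emph{weaker}. The resolution — and the point I expect to be the main obstacle to state cleanly — is that the hypothesis ``$j:\hh\to\kk$ is a linear isomorphism'' is present throughout, and in Theorem~\ref{intJE} the bracket $[jx,jy]_{\kk}$ contributing the ``$\kk$ abelian'' clause only enters because the Nijenhuis computation for $J$ lands its $\kk$-component using the full bracket of $\ggo$; for the connection $\nabla$ on $\hh$ alone the ideal $\kk$ never appears. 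Hence I believe the honest statement of the theorem must read $(i)\ \Leftrightarrow\ [\,\kk$ abelian and $j$ a $1$-cocycle$\,]$ \emph{once one also records that $\kk$ being abelian is forced, or else one must add $\kk$ abelian as part of $(i)$}. I would therefore either (a) carry the standing assumption that $\kk$ is abelian (natural, since that is when $\nabla$ of Theorem~\ref{thm2} is an affine structure — see the Example preceding the statement), and then $(i)\Leftrightarrow j$ a cocycle $\Leftrightarrow(ii)\Leftrightarrow(iii)$ by Theorem~\ref{intJE}; or (b) absorb ``$\kk$ abelian'' into a precise restatement. Assuming route (a), the proof is: $(i)\Leftrightarrow$ (torsion-freeness of $\nabla$, flatness being automatic) $\Leftrightarrow$ ($j$ is a $1$-cocycle of $(\hh,\pi)$) $\Leftrightarrow$ ($\kk$ abelian and $j$ a $1$-cocycle, using the standing hypothesis) $\Leftrightarrow(ii)\Leftrightarrow(iii)$ by Theorem~\ref{intJE}. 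The only genuine computations — flatness of $\tilde\pi$ and the translation of torsion-freeness into the cocycle identity — are both one-liners once $\tilde\pi=j^{-1}\pi(\cdot)j$ is introduced, exactly as in the proof of Theorem~\ref{thm2}.
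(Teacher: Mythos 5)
Your argument follows essentially the same route as the paper's own proof: the paper likewise reduces (ii) and (iii) to ``$\kk$ abelian and $j$ a $1$-cocycle of $(\hh,\pi)$'' via Theorem~\ref{intJE}, observes that $\Id=j^{-1}\circ j$ is then a $1$-cocycle of the conjugate representation $\tilde\pi(x)=j^{-1}\pi(x)j$, and concludes through Proposition~\ref{propa} that $\nabla_xy=\tilde\pi(x)y$ is torsion-free and flat. Your two computations (flatness is automatic because $\tilde\pi$ is a representation; torsion-freeness is exactly the cocycle identity transported by $j$) are the same ones the paper uses. Where you go beyond the paper is in flagging the asymmetry between (i) and (ii), and that concern is justified: the paper dismisses the implication (i)$\Rightarrow$(ii) with ``the converse also follows from Theorem~\ref{thm2},'' but condition (i) never sees the bracket of $\kk$, so it is equivalent to the cocycle condition alone. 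Concretely, take $\pi=0$, $\hh$ abelian and $\kk$ any non-abelian Lie algebra of the same dimension; then $\nabla\equiv 0$ is an affine structure on $\hh$ while $J$ fails to be integrable, so the statement is false as literally written. Your proposed repair --- carrying ``$\kk$ abelian'' as a standing hypothesis (the setting of the Example preceding the theorem and of Theorem~\ref{main}), or absorbing it into (i) --- is the correct one, and with it your chain (i) $\Leftrightarrow$ torsion-freeness $\Leftrightarrow$ $j$ a $1$-cocycle $\Leftrightarrow$ (ii) $\Leftrightarrow$ (iii) closes exactly as you describe.
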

\begin{proof} We know from Theorem~\ref{thm2}  that the integrability of $J$ on $\ggo=\hh\oplus\kk$ is equivalent to the fact that $\kk$ is an abelian ideal and $j:\hh\to\kk$ is a 
$1$-cocycle of $(\hh, \pi)$. Thus, $j^{-1} \circ j= Id:\hh \to \hh$ is a $1$-cocycle of the equivalent representation  $\tilde{\pi}(x)=j^{-1}\pi(x)j$, and the conclusion follows from Proposition~\ref{propa}. In fact, $\tilde{\pi}:\hh\to\End(\hh)$ is a torsion-free flat connection on $\hh$.
The converse also follows from  Theorem \ref{thm2}.
\end{proof}

The results proved throughout this work now yield the following main Theorem. More precisely, denoting by $J,E,F$ the structures described in 
Section~2, we have the following result.

 \begin{thm}\label{main} Let $\ggo=\hh\oplus \kk$ denote a semidirect product Lie algebra with $\dim \hh=\dim \kk$. There are correspondences between the following sets: 
\begin{itemize}
\item Affine structures on $ \hh$;
\vspace{4pt}\item Complex structures $J$ on $\ggo$ for which $\hh$ and $\kk$ are totally real subspaces. 
\vspace{4pt}\item Paracomplex structures $E$ on $\ggo$ such that $E\hh=\kk$; 
\vspace{4pt}\item Torsion-free connections on $\ggo$ parallelizing both the almost complex structure $J$ and the almost paracomplex structure $E$.
\end{itemize}
\end{thm}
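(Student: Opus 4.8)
The plan is to assemble Theorem~\ref{main} as a dictionary among four sets, using the equivalences already established rather than re-proving anything from scratch. First I would fix once and for all the splitting $\ggo=\hh\oplus\kk$ and recall from Section~2 that a totally real almost complex structure $J$ on $\ggo$ (with $J\hh=\kk$) is the same datum as a linear isomorphism $j:\hh\to\kk$, via $J(x,v)=(-j^{-1}v,jx)$; likewise a totally real almost paracomplex structure $E$ with $E\hh=\kk$ corresponds to the same $j$ via $E(x,v)=(j^{-1}v,jx)$. This already gives a bijection between the (almost) structures of the second and third bullets once we impose integrability on both sides simultaneously — and that simultaneous imposition is legitimate precisely by Theorem~\ref{intJE}, which says $J$ integrable $\iff$ $E$ integrable $\iff$ $\kk$ abelian and $j$ is a $1$-cocycle of $(\hh,\pi)$.

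Next I would connect these to affine structures on $\hh$. Given an integrable $J$ (equivalently, $\kk$ abelian and $j$ a $1$-cocycle), define $\nabla^1_x y:=j^{-1}\pi(x)jy=\tilde\pi(x)y$ on $\hh$. By the computation in the proof of Theorem~\ref{thm2} (applying $j^{-1}$ to the cocycle identity for $j$), $\nabla^1$ is torsion-free, and by the Remark following Theorem~\ref{thm2} it is flat because $\tilde\pi$ is a representation; hence $\nabla^1$ is an affine structure on $\hh$. Conversely, starting from an affine structure $\nabla^1$ on $\hh$, Proposition~\ref{propa} (with $L=\nabla^1$) shows $\mathrm{Id}:\hh\to\hh$ is a $1$-cocycle of $(\hh,\tilde\pi)$ where $\tilde\pi(x)=\nabla^1_x\in\End(\hh)$ is a Lie algebra homomorphism; transporting by any fixed linear isomorphism $j:\hh\to\kk$ — for instance, using the construction of Example~\ref{exA} with $\theta=j$, which realizes $\kk=V$ as an abelian ideal with representation $\pi=\tilde L$ making $j$ a $1$-cocycle — produces a semidirect product with $\kk$ abelian and $j$ a $1$-cocycle of $(\hh,\pi)$, hence an integrable $J$ by Theorem~\ref{intJE}. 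One must be careful that the correspondence is stated ``up to suitable equivalence'': an affine structure on $\hh$ determines the pair $(\kk,\pi)$ only up to the equivalence of representations $\pi\leftrightarrow\tilde\pi$, so the precise statement is a correspondence of isomorphism classes, mirroring Proposition~\ref{p100}; I would phrase it that way to avoid overclaiming a strict bijection.

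Finally, the fourth bullet is folded in directly by Theorem~\ref{thm2} (and its restated form Theorem~\ref{thm2}.'): a torsion-free connection $\nabla$ on $\ggo$ with $\nabla J=0$ and $\nabla E=0$ exists if and only if $J$ (equivalently $E$) is integrable, and when it exists it is the unique connection \eqref{tf-KJ}, namely $\nabla_{x_1+jy_1}(x_2+jy_2)=\tilde\pi(x_1)x_2+\pi(x_1)jy_2$. So the map ``integrable $J$'' $\mapsto$ ``the connection \eqref{tf-KJ}'' and its inverse ``$\nabla$'' $\mapsto$ ``the parallel $J$'' give the last leg of the dictionary, and under the identification above this connection restricts on $\hh$ to exactly the affine structure $\nabla^1_x=\tilde\pi(x)$ of the first bullet, so all four legs are mutually compatible. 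The writeup is then just a matter of citing Theorem~\ref{intJE}, Theorem~\ref{thm2} (and \ref{thm2}.'), Proposition~\ref{propa}, Proposition~\ref{p100} and Example~\ref{exA} in the right order; there is essentially no new computation. The one genuine point of care — the only place I expect any friction — is stating the ``up to equivalence'' clause correctly so that the four sets really are in bijection rather than merely surjecting onto one another: an affine structure on $\hh$ does not by itself remember $\kk$ or $\pi$, only their equivalence class, which is why the theorem must be read at the level of the appropriate equivalence relations, exactly as in Proposition~\ref{p100}.
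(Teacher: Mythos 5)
Your proposal is correct and follows essentially the same route as the paper, which offers no independent argument for Theorem~\ref{main} but obtains it by assembling Theorem~\ref{intJE}, Theorem~\ref{thm2} (and \ref{thm2}.'), Proposition~\ref{propa} and the construction of Example~\ref{exA}, exactly as you do. Your explicit caveat that an affine structure on $\hh$ determines $(\kk,\pi)$ only up to equivalence of representations, so the correspondence must be read at that level, is a point the paper leaves implicit and is worth keeping.
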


\begin{rem} Note that starting with an affine structure $\nabla^1$  on $\hh$ we have an almost complex structure on the semidirect product Lie algebra $\ggo=\hh \oplus_{\pi} V$. Moreover, the connection $\nabla$ on $\ggo$ given by $\nabla_{(x_1, jy_1)} (x_2, jy_2)=(\tilde{\pi}(x_1) x_2, \pi(x_1) y_2)$ is also an affine structure on $\ggo$. Therefore, $\nabla$ permits to construct a complex structure and a para-complex structure on the semidirect product via $\nabla$, $\ggo \oplus V_{\ggo}$, where $V_{\ggo}$ denotes the underlying vector space of $\ggo$ seen as an abelian Lie algebra.
\end{rem}

\begin{exa} {\bf On symplectic structures} 
As we already  mentioned, a left-invariant symplectic structure on a Lie group $H$ induces an Hermitian complex structure on $\ct^*\hh$, which in turn corresponds to some left-invariant generalized complex structures on $G$. This can be read as a one-cocycle of $(\hh,\ad^*)$. 

As  said in Example \ref{con-sim} a symplectic structure $\omega$ on a Lie algebra $\hh$ defines a torsion flat connection $\nabla$. Once we apply the equivalence above for the coadjoint representation, we get exactly this  connection.
\end{exa}

\section{Examples and applications}

In this section we show some applications once we make a natural choice of metri. Explicit computations of complex structrures and affine structures are also done. 

\subsection{Almost Hermitian and para-Hermitian structures on $\ct_{\pi} \hh$}

We now start from any inner product $\la\,,\,\ra$ on the Lie algebra $\hh$ and observe that $\la\,,\,\ra$ can be naturally extended to the semidirect product Lie algebra  $\ggo=\hh\oplus \kk$: 
\begin{itemize}
\item to an inner product $g$:
\begin{equation}\label{gofj}
g((x,u), (y, v)):=\la x, y \ra + \la j^{-1} u, j^{-1} v\ra ;
\end{equation}
\item to some neutral metrics $\bar g$ and $\tilde g$:
\begin{equation}\label{gofE}
 \bar g((x,u), (y, v)):=\la x, y \ra - \la j^{-1} u, j^{-^1} v\ra ;
\end{equation}
\begin{equation}\label{gofF}
 \tilde g ((x,u), (y, v)):=\la x, j^{-1} v \ra + \la j^{-1} u, y \ra ;
\end{equation}
\end{itemize}
It is easy to check that for all $x, y\in \hh$ and $u,v \in \kk$:
\begin{itemize}
\item $g(J(x,u), J(y, v))= g((x,u), (y, v))$;
\vspace{4pt}\item $\bar g(E(x,u), E(y, v))= -\bar g((x,u), (y, v))$; 
\vspace{4pt}\item $\tilde g (F(x,u), F(y, v))= -\tilde g((x,u), (y, v))$.
\end{itemize}
Thus, the pair $(J,g)$ defines an almost Hermitian structure on  $\ggo=\hh\oplus \kk$, while $(E,\bar g)$, $(F,\tilde g)$ give almost para-Hermitian structures on  $\ggo=\hh\oplus \kk$. Observe that $j:\hh \to \kk$ is an isometry with respect to the inner products induced by $g$ on these subspaces.

In view of the equalities above, we have the corresponding $2$-forms:
$$\omega_J(X,Y)=g(JX,Y),\qquad \omega_E (X,Y)=\bar g(EX,Y),\qquad \omega_F (X,Y)=\tilde g (FX,Y).$$
It is  known that $\omega_J$ is called the K\"ahler or fundamental  {$2$-form} of $(J,g)$ and correspondingly, $\omega_E$ and $\omega_F$ are the fundamental {$2$-forms} of the almost para-Hermitian structures $(E,\bar g)$ and $(F,\tilde g)$ respectively.  We may refer to \cite{CFG} for further information concerning almost para-Hermitian structures and related notions. We recall that:
\begin{itemize}
\item  the pair $(J,g)$ is called
{\em almost K\"ahler} if the fundamental form $\omega$ is closed;
\vspace{2pt}\item the pair $(E, \omega_E)$ (respectively,  $(F,\omega_F)$) is called {\em almost para-K\"ahler} if $d\omega_E=0$ (respectively,  
$d \omega_F=0$).  
\end{itemize}
Observe that notwithstanding  the differences under these metrics and structures, we have
\begin{equation}\label{omegas}
\omega_J(X,Y)=-\omega_E(X,Y)=\omega_F(X,Y) \quad \mbox{ for all }X,Y\in \ct_{\pi}\hh.
\end{equation}
Moreover, these 2-forms are non-degenerate. Therefore, they are symplectic if and only if they are closed. We now characterize the closeness condition for these structures.

\begin{prop}  For any linear isomorphism $j: \hh \to \kk$ and  metric $\la\,,\,\ra$ on $\h$, let $(J,g)$ denote the almost Hermitian structure  on the semidirect product Lie algebra $\ggo=\hh \oplus \kk$ described by $J$ as in Equation \eqref{defJ}, and $g$ as above \eqref{gofj}, and let $(E,\bar g)$, $(F,\tilde g)$be  the almost para-Hermitian structures described respectively by Equations \eqref{defE},\eqref{gofE} and by Equations \eqref{defF},\eqref{gofF}. Then, the following properties are equivalent: 
\begin{enumerate}[(a)]
\item $(J,g)$ is almost K\"ahler; 
\item $(E,\bar g)$ is almost para-K\"ahler;
\item $(F,\tilde g)$ is para-K\"ahler;
\item $\kk$ is abelian and 
\begin{equation}\label{equ-almostc}
\la j^{-1} \pi(y) j z, x\ra - \la j^{-1} \pi(x) j z, y\ra=\la [x,y], z \ra \quad \mbox{ for all } x,y, z\in \hh.
\end{equation}
\end{enumerate}
\end{prop}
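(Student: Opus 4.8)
The plan is to reduce everything to a single computation of the exterior derivative of the fundamental $2$-form, exploiting the identities $\omega_J = -\omega_E = \omega_F$ already recorded in Equation~\eqref{omegas}. Since these three $2$-forms agree up to sign, their differentials agree up to sign as well, so $d\omega_J = 0 \iff d\omega_E = 0 \iff d\omega_F = 0$; this immediately gives the equivalences $(a) \Leftrightarrow (b) \Leftrightarrow (c)$ with no further work. Thus the entire content of the proposition is the equivalence of any one of these with $(d)$, and I would phrase the argument in terms of $\omega_F$, whose defining metric $\tilde g$ is the simplest to manipulate because $\tilde g((x,u),(y,v)) = \langle x, j^{-1}v\rangle + \langle j^{-1}u, y\rangle$ pairs the $\hh$-part against the $\kk$-part.

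First I would write down $\omega := \omega_F$ explicitly on the decomposition $\ggo = \hh \oplus \kk$. Using $F(x,v) = (x,-v)$ one gets $\omega((x,u),(y,v)) = \tilde g((x,-u),(y,v)) = \langle x, j^{-1}v\rangle - \langle j^{-1}u, y\rangle$. In particular $\omega$ vanishes on $\hh\times\hh$ and on $\kk\times\kk$, and $\omega((x,0),(0,v)) = \langle x, j^{-1}v\rangle$; equivalently, identifying $\kk$ with $\hh$ via $j$, the form $\omega$ is essentially the pairing $\langle\cdot,\cdot\rangle$ between $\hh$ and $j(\hh)$. Next I would apply the Chevalley--Eilenberg formula for $d\omega$ on a Lie algebra,
$$d\omega(X,Y,Z) = -\omega([X,Y],Z) - \omega([Y,Z],X) - \omega([Z,X],Y),$$
and evaluate it on the three types of triples dictated by the splitting: $(x,y,z)$ all in $\hh$; $(x,y,u)$ with $x,y\in\hh$, $u\in\kk$; $(x,u,v)$ with $x\in\hh$, $u,v\in\kk$; and $(u,v,w)$ all in $\kk$. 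Because $\omega$ kills $\hh\times\hh$ and $\kk\times\kk$, and because the bracket relations~\eqref{corchete} tell us $[\hh,\hh]\subseteq\hh$, $[\hh,\kk]\subseteq\kk$, $[\kk,\kk]\subseteq\kk$, most terms drop out. The case $(u,v,w)$ all in $\kk$ will produce exactly the condition that $\langle\cdot,\cdot\rangle$ transported to $\kk$ is a $2$-cocycle for $[\cdot,\cdot]_\kk$ — and, since $j$ is an isometry onto $\kk$ by construction and the pairing is nondegenerate, this forces $[\cdot,\cdot]_\kk = 0$, i.e. $\kk$ abelian. The case $(x,y,u)$ with $u = jz$ will, after substituting $[x,jz] = \pi(x)jz$ and using $\omega((y,0),(0,\pi(x)jz)) = \langle y, j^{-1}\pi(x)jz\rangle$, reduce precisely to Equation~\eqref{equ-almostc}. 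The remaining cases $(x,y,z)\in\hh^3$ and $(x,u,v)$ with $u,v\in\kk$ vanish identically (the first because $\omega|_{\hh\times\hh}=0$ and $[\hh,\hh]\subseteq\hh$; the second because once $\kk$ is abelian $[\kk,\kk]=0$ and $\omega|_{\kk\times\kk}=0$), so they impose no condition.

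Assembling these, $d\omega \equiv 0$ is equivalent to the conjunction of \lq\lq$\kk$ abelian\rq\rq\ and Equation~\eqref{equ-almostc}, which is exactly statement $(d)$. I would be slightly careful about the logical order: the $\kk$-abelian condition should be extracted first (from the all-$\kk$ triple), and only then does the mixed triple $(x,y,jz)$ simplify to~\eqref{equ-almostc}; conversely, if $\kk$ is abelian then the all-$\kk$ and $(x,u,v)$ triples automatically give zero, so only~\eqref{equ-almostc} remains to be checked. The main obstacle, such as it is, is purely bookkeeping: correctly tracking the signs coming from $F(x,v)=(x,-v)$ versus the raw metric $\tilde g$, and making sure the $j^{-1}$'s in the definition of $\tilde g$ interact correctly with $[x,jz]=\pi(x)jz$ so that the three-term Chevalley--Eilenberg sum collapses to the two-term asymmetric expression in~\eqref{equ-almostc} rather than a spurious third term. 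There is no conceptual difficulty; once the $\omega_J=\omega_F$ reduction is invoked, the proof is a short and essentially forced computation.
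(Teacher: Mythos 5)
Your overall strategy is the paper's: the equivalences $(a)\Leftrightarrow(b)\Leftrightarrow(c)$ follow at once from $\omega_J=-\omega_E=\omega_F$ in \eqref{omegas}, and the rest is a Chevalley--Eilenberg computation of $d\omega$ on the triples adapted to the splitting. The $(\hh,\hh,\hh)$ and $(\hh,\hh,\kk)$ cases are handled correctly, and your mixed triple $(x,y,jz)$ does collapse to \eqref{equ-almostc} exactly as you say. But there is a concrete error in where the condition ``$\kk$ abelian'' comes from, and as written your computation would never produce it. For $u,v,w\in\kk$ one has $[u,v],[v,w],[w,u]\in\kk$ and $\omega|_{\kk\times\kk}=0$, so every term of $d\omega(u,v,w)$ vanishes identically: the all-$\kk$ triple imposes \emph{no} condition at all, and in particular does not yield a $2$-cocycle identity forcing $[\cdot,\cdot]_{\kk}=0$ (a cocycle condition would not force the bracket to vanish in any case). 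Meanwhile the triple $(x,u,v)$ with $x\in\hh$, $u,v\in\kk$ --- which you dismiss as vanishing ``once $\kk$ is abelian'' --- is precisely the one that carries the information: since $\kk$ is an ideal, $[x,u],[v,x]\in\kk$ kill two of the three terms, leaving $d\omega(x,u,v)=-\omega([u,v]_{\kk},x)=\langle j^{-1}[u,v]_{\kk},x\rangle$ up to sign, and nondegeneracy of $\langle\,,\,\rangle$ then forces $[u,v]_{\kk}=0$. Your use of ``once $\kk$ is abelian'' to discard this triple is circular, because abelianness was supposed to be extracted first and your designated source for it is empty.

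The fix is exactly the paper's observation: $\kk$ is an isotropic ideal for the nondegenerate $2$-form $\omega$ (equivalently, $\hh\perp\kk$ for $g$ and $F\kk=\kk$), and an isotropic ideal of a symplectic Lie algebra is necessarily abelian by the computation above. With the abelian condition relocated to the $(\hh,\kk,\kk)$ triple, the rest of your argument assembles correctly and reproduces the paper's proof.
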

\begin{proof} The equivalence of (a),(b) and (c) follows at once from the definitions of the two-forms in  \eqref{omegas} and the respective closeness conditions. We now prove the equivalence of (a) and (d). We first observe that because of \eqref{gofj}, $\hh$ is ortogonal to $\kk$ with respect to $g$. Therefore,  $\kk$ is an  isotropic ideal for $\omega_J$ (that is,  $\omega_{|\kk \times \kk}=0$). Hence, if $\omega_J$ is closed (and so, symplectic), then $\kk$ must be abelian. 

Assume that $\kk$ is abelian. By taking $x,y,z\in \hh$, one has
\begin{itemize}
\item  $d\omega_J(x,y,z)= {d\omega_J(jx,jy,jz)=} d\omega_J(jx,y,jz)= 0$ is always satisfied. 
\vspace{4pt}\item $d\omega(x,y,jz)=0$ if and only if Equation \eqref{equ-almostc} holds. 
\end{itemize}

Conversely, it is easily seen by direct calculation  that if \eqref{equ-almostc} holds and $\kk$ is abelian, then $d\omega_J=0$. 

\end{proof}

\begin{cor} Let $\ggo=\hh\oplus \kk$ denote a semidirect product Lie algebra attached to the representation $\pi:\hh\to \End(\kk)$. Let $\la\,,\,\ra$ denote an inner product on $\hh$ such that $j^{-1}\pi(x)j:=\tilde{\pi}(x) \in \sso(\hh)$ for every $x\in \hh$ and $j:\hh\to \kk$ a linear isomorphism. Let $g$ denote the metric on $\ggo$ as above. If $(g, J)$ is almost K\"ahler, then  $(g,J)$ is K\"ahler.
\end{cor}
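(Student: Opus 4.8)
The plan is to show that, under the stated hypothesis that $\tilde\pi(x):=j^{-1}\pi(x)j\in\sso(\hh)$ for all $x\in\hh$, the almost K\"ahler condition forces $\nabla J=0$, which by Theorem~\ref{thm2} is the statement that $(g,J)$ is K\"ahler. First I would recall that, by the previous Proposition, $(g,J)$ almost K\"ahler means precisely that $\kk$ is abelian and Equation~\eqref{equ-almostc} holds. Since $\kk$ is abelian and $j$ is an isomorphism, Equation~\eqref{equ-almostc} rewritten via $\tilde\pi$ reads
$$\la \tilde\pi(y)z,x\ra-\la\tilde\pi(x)z,y\ra=\la[x,y]_\hh,z\ra\qquad\text{for all }x,y,z\in\hh.$$
Now use the skew-symmetry hypothesis $\la\tilde\pi(x)z,y\ra=-\la z,\tilde\pi(x)y\ra$: the left-hand side becomes $-\la z,\tilde\pi(y)x\ra+\la z,\tilde\pi(x)y\ra=\la\tilde\pi(x)y-\tilde\pi(y)x,z\ra$. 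Comparing with the right-hand side and using non-degeneracy of $\la\,,\,\ra$ gives $\tilde\pi(x)y-\tilde\pi(y)x=[x,y]_\hh$ for all $x,y\in\hh$, i.e. the identity map is a $1$-cocycle of $(\hh,\tilde\pi)$. Applying $j$, this says exactly that $j$ is a $1$-cocycle of $(\hh,\pi)$.

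With $\kk$ abelian and $j$ a $1$-cocycle of $(\hh,\pi)$, Theorem~\ref{intJE} gives that $J$ is integrable, and Theorem~\ref{thm2} then produces the unique torsion-free connection $\nabla$ with $\nabla J=0$ (and $\nabla F=\nabla E=0$), given by $\nabla_{x_1+jy_1}(x_2+jy_2)=\tilde\pi(x_1)x_2+\pi(x_1)jy_2$. It remains to check that this $\nabla$ coincides with the Levi-Civita connection of $g$; equivalently, that $\nabla g=0$, since $\nabla$ is already torsion-free and the Levi-Civita connection is the unique such metric connection. An almost Hermitian structure whose fundamental form is parallel for some torsion-free metric connection is K\"ahler, but more directly: once $\nabla g=0$ and $\nabla J=0$, $\nabla$ is the Levi-Civita connection and $J$ is parallel for it, which is the definition of K\"ahler.

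The computation $\nabla g=0$ is where the skew-symmetry hypothesis is used again and is the main (though routine) obstacle. One checks it on the blocks of the decomposition $\ggo=\hh\oplus\kk$, using that $\hh\perp\kk$ for $g$ and that $g$ restricted to each factor is transported from $\la\,,\,\ra$ via $j$. For $X=x_1+jy_1$, on the pair $(x_2,x_3)\in\hh\times\hh$ the relevant term is $\la\tilde\pi(x_1)x_2,x_3\ra+\la x_2,\tilde\pi(x_1)x_3\ra$, which vanishes by $\tilde\pi(x_1)\in\sso(\hh)$; on the pair $(jy_2,jy_3)\in\kk\times\kk$ one gets $\la j^{-1}\pi(x_1)jy_2,y_3\ra+\la y_2,j^{-1}\pi(x_1)jy_3\ra=\la\tilde\pi(x_1)y_2,y_3\ra+\la y_2,\tilde\pi(x_1)y_3\ra=0$ likewise; and the mixed $\hh\times\kk$ terms vanish because $\nabla_X$ preserves $\hh$ and $\kk$ and these subspaces are $g$-orthogonal. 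Hence $Xg(Y,Z)=0=g(\nabla_XY,Z)+g(Y,\nabla_XZ)$ for basis-type arguments (all structure constants being constant on a Lie algebra), so $\nabla g=0$. Combined with $\nabla J=0$ and torsion-freeness, $(g,J)$ is K\"ahler, completing the proof.
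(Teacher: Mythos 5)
Your proposal is correct and follows essentially the same route as the paper: use the almost K\"ahler characterization together with skew-symmetry of $\tilde\pi$ to show $\Id$ is a $1$-cocycle of $(\hh,\tilde\pi)$ (hence $j$ is a $1$-cocycle of $(\hh,\pi)$ and $J$ is integrable), then observe that the canonical torsion-free connection of Theorem~\ref{thm2} satisfies $\nabla g=0$ and $\nabla J=0$. Your explicit block-by-block verification of $\nabla g=0$ merely fills in a step the paper dismisses as ``not hard to see.''
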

\begin{proof} Indeed, $\tilde{\pi}(x)= j^{-1} \pi(x) j$ defines an equivalent representation of $\pi$, such that $\tilde{\pi}(x)\in\sso(\hh)$ for all $x\in \hh$. By the previous proposition, $(J,g)$ is almost K\"ahler if and only if  
\begin{itemize}
\item $\kk$ is abelian and 
\item $\tilde{\pi}(x) y - \tilde{\pi}(y) x= [x,y]$ for all $x,y\in \hh$,  which follows from the skew-symmetric property of $\tilde{\pi}$. Therefore,  the linear map ${\Id:}\hh \to \hh$ is a 1-cocycle of $(\hh, \tilde{\pi})$. 
\end{itemize}
This yields that $j: \hh \to \kk$ is a 1-cocycle of $(\hh, \pi)$ and the almost complex structure $J$ given by $J(x, jy)=(-y, jx)$ is integrable. It is not hard to see that the connection $\nabla$ given by $\nabla_{(x_1, jy_1)} (x_2, jy_2)=(\tilde{\pi}(x_1)x_2, \pi(x_1)jy_2)$ is torsion-free and $\nabla g=0$ and so, $(g,J)$ is K\"ahler. 
\end{proof}

We shall now describe some explicit examples  of the equivalences obtained in Theorem~\ref{main} between compatible LSA on a Lie algebra $\h$ and totally real complex structures on a semidirect product Lie algebra $\g =\hh \oplus V$.

\subsection{From $LSA$ to totally real complex structures}

We shall consider some real Lie algebras, equipped with some LSA deduced from the examples described in  \cite{Bu}, 
and construct the corresponding semidirect product Lie algebras and totally real complex structures on them.

Before starting, we make the following general remark.  Let $\cdot$ denote an LSA structure on a vector space $V$ and $\hh$ the underlying Lie algebra. As we explained in the previous section, {\em any} linear isomorphism $j: \h \to V$ determines

\begin{itemize}
\item[a)] a semidirect product Lie algebra $\g=\h \oplus V$, with Lie bracket described by  \eqref{bracketA};
\vspace{4pt}\item[b)] a totally real complex structure $J$ on $\g$, described as in \eqref{defJ}.
\end{itemize}
Thus, starting from a basis $\{e_i\}$ of $\h$, we can consider the corresponding basis $	\{v_i:=j (e_i)\}$ of $V$, with the obvious advantage that with respect to the basis $\{e_i, v_i \}$ of $\g$, the complex structure $J$, defined as in Equation \eqref{defJ}, will be completely { determined by the simple equations}
$$J e_i= v_i, \quad i=1,\dots ,n .$$
Clearly, $V$ being an abelian ideal of $\g$, in any case we shall have $[v_i,v_k]=0$, and the description of the Lie algebra $\g$ will be completed by calculating 
$$[e_i,e_k]=e_i \cdot e_k -e_k \cdot e_i, \quad    [e_i,v_k]=j L(e_i)j^{-1} v_k, \qquad i,k=1,\dots ,n.$$

\medskip
{{\bf Three-dimensional examples}.} In the next subsection we shall illustrate compatible LSA on three-dimensional Lie algebras, whose either tangent or cotangent Lie algebras admit some totally real complex structures. Here, we consider a family of three-dimensional Lie algebras $\h$, which, by the results proved in \cite{CO}, do not admit totally real complex structure neither on $T \h$ nor on $T^* \h$.

For any real constant $\lambda \neq 0, -1<\lambda <1$, consider the three-dimensional Lie algebra $\h =\mathfrak{r}_{3,\lambda}$=span$\{e_1,e_2,e_3\}$, described by 
\begin{equation}\label{Lie3D}
[e_1, e_2]=e_2, \quad [e_1, e_3]=\lambda e_3,
\end{equation}
and on $\h$ the compatible LSA
\begin{equation}\label{LSA3D}
e_1 \cdot e_1 =(\lambda+1) e_1, \quad e_1 \cdot e_2 = e_2, \quad e_1 \cdot e_3 =\lambda e_3, \quad e_2 \cdot e_3 = e_1, \quad e_3 \cdot e_2 = e_1.
\end{equation}
Denoted by $V$ the vector space underlying $\h$ and by $j: \h \to V$ a linear isomorphism, we consider on $V$ the basis $\{v_i := j(e_i), i=1,2,3\}$. We then apply \eqref{bracketA} to $e_i,v_i$ and prove the following.

\begin{prop}\label{ex3D}
For any real constant $\lambda \neq 0, -1<\lambda <1$, consider the three-dimensional real Lie algebra $\h=\mathfrak{r}_{3,\lambda}$ given by \eqref{Lie3D}, the vector space $V$ underlying $\h$ and on it the compatible LSA structure given by \eqref{LSA3D}. Then, the Lie brackets
$$\begin{array}{lllll} 
\left[e_1,e_2 \right]=e_2, \; &    [e_1,e_3]=\lambda e_3, \; &  \\[4pt]
\left[e_1, v_1 \right]=(\lambda+1) v_1 , \; &  [e_1,v_2]=v_2, \; &  [e_1,v_3]=\lambda v_3, \; & 
\left[e_2,v_3 \right]=v_1, \; &  [e_3,v_2]=v_1 \;  \\[4pt]
\end{array}$$
describe a semidirect product Lie algebra $\g = \h \oplus V=span(e_i) \oplus span (v_i)$, admitting the totally real complex structure $J$ completely determined by $Je_i=v_i$.
\end{prop}

For the Lie algebra $\h =\mathfrak{r}_{3,\frac{1}{2}}$, given by \eqref{Lie3D} with $\lambda=\frac 12$, we can also consider a different compatible LSA, namely, 
\begin{equation}\label{LSA3D2}
\begin{array}{l}
e_1 \cdot e_1 =\frac 32 e_1, \;\, e_1 \cdot e_2 = e_2, \;\, e_1 \cdot e_3 =\frac 12 e_3, \;\, e_2 \cdot e_3 = e_1, \;\, e_3 \cdot e_2 = e_1, \;\, e_3 \cdot e_3 = -e_2.
\end{array}
\end{equation}
As before, we denote by $V$ the vector space underlying $\h$ and by $j: \h \to V$ a linear isomorphism and fix on $V$ the basis $\{v_i := j(e_i), i=1,2,3\}$. Applying \eqref{bracketA} to $e_i,v_i$, we obtain the following.

\begin{prop}
Consider the three-dimensional real Lie algebra $\h=\mathfrak{r}_{3,\frac{1}{2}}$ described by \eqref{Lie3D} with $\lambda =\frac 12$, the vector space $V$ underlying $\h$ and on it the compatible LSA structure given by \eqref{LSA3D2}. Then, the Lie brackets
$$\begin{array}{lll} 
\left[e_1,e_2 \right]=e_2, \; &    [e_1,e_3]=\frac 12 e_3, \; &  \\[4pt]
\left[e_1, v_1 \right]=\frac 32 v_1 , \; &  [e_1,v_2]=v_2, \; &  [e_1,v_3]=\frac 12 v_3, \;  \\[4pt]
\left[e_2,v_3 \right]=v_3, \; &  [e_3,v_2]=v_1, \; &  [e_3,v_3]=-v_2  \\[4pt]
\end{array}$$
determine a semidirect product Lie algebra $\g = \h \oplus V=span(e_i) \oplus span (v_i)$, not isomorphic to the one described (for $\lambda =\frac 12$) in the above Proposition~{\em\ref{ex3D}}, admitting the totally real complex structure $J$ given by $Je_i=v_i$.
\end{prop}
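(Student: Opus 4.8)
The plan is to verify directly that the given brackets define a Lie algebra, identify it as the semidirect product $\g=\h\oplus V$ built from the LSA data, and then exhibit the totally real complex structure and check it is not isomorphic to the previous one. First I would apply the general recipe spelled out just before the three-dimensional examples: with $\{e_i\}$ a basis of $\h$ and $v_i:=j(e_i)$, the brackets on $\g=\h\oplus V$ are forced to be $[e_i,e_k]=e_i\cdot e_k-e_k\cdot e_i$, $[v_i,v_k]=0$, and $[e_i,v_k]=jL(e_i)j^{-1}v_k$, where $L(e_i)e_k=e_i\cdot e_k$ from \eqref{LSA3D2}. So I would simply compute $e_1\cdot e_2-e_2\cdot e_1=e_2$ and $e_1\cdot e_3-e_3\cdot e_1=\tfrac12 e_3$ (recovering $\mathfrak r_{3,1/2}$), and $L(e_1)e_1=\tfrac32 e_1$, $L(e_1)e_2=e_2$, $L(e_1)e_3=\tfrac12 e_3$, $L(e_2)e_3=e_1$, $L(e_3)e_2=e_1$, $L(e_3)e_3=-e_2$, which transported via $j$ give exactly the listed mixed brackets $[e_1,v_1]=\tfrac32v_1$, $[e_1,v_2]=v_2$, $[e_1,v_3]=\tfrac12 v_3$, $[e_2,v_3]=v_1$, $[e_3,v_2]=v_1$, $[e_3,v_3]=-v_2$. (There is a typo in the statement, $[e_2,v_3]=v_3$, which should read $[e_2,v_3]=v_1$ as $L(e_2)e_3=e_1$; I would write the corrected value.)

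Next I would invoke Example~\ref{exA} and Theorem~\ref{intJE}: since \eqref{LSA3D2} is a compatible LSA on $\h$, the construction of Example~\ref{exA} produces a genuine semidirect product Lie algebra $\g=\h\oplus V$ with $V$ an abelian ideal and $j:\h\to V$ a $1$-cocycle of $(\h,\tilde L)$ where $\tilde L(x)=jL(x)j^{-1}$; hence the Jacobi identity holds automatically and no separate verification is needed. By Theorem~\ref{intJE} (equivalently Theorem~\ref{thm2}), abelianness of $V$ together with the cocycle property of $j$ gives that the almost complex structure $J(x,v)=(-j^{-1}v,jx)$ — which in the basis $\{e_i,v_i\}$ is just $Je_i=v_i$ — is integrable, so $J$ is a totally real complex structure on $\g$. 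This disposes of everything except the non-isomorphism claim.

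The main obstacle is the non-isomorphism assertion: one must show the six-dimensional Lie algebra $\g$ here is not isomorphic to the $\g$ of Proposition~\ref{ex3D} specialized to $\lambda=\tfrac12$. The natural strategy is to extract an isomorphism invariant that distinguishes them. I would compare the commutator ideals, or better the derived series and lower central series, and the dimensions of $[\g,\g]$ and $[\g,[\g,\g]]$; the extra relation $e_3\cdot e_3=-e_2$ (hence $[e_3,v_3]=-v_2$) changes which $v_i$ lie in $[\g,\g]$ and alters the nilradical structure. Concretely, in the present $\g$ the subspace spanned by $e_2,e_3,v_1,v_2,v_3$ behaves differently under bracketing by $e_1$ and by $e_3$ than the analogous subspace in Proposition~\ref{ex3D}; I would compute $[\g,\g]$ in both cases, show the induced action of $\g/[\g,\g]$ on $[\g,\g]$ has different eigenvalue/Jordan data (the term $[e_3,v_3]=-v_2$ creates a nontrivial off-diagonal action of $e_3$ that is absent in the other algebra), and conclude they cannot be isomorphic. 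If the coarse invariants (dimensions of terms of the lower central and derived series) already differ, that would be the cleanest route and I would present that; otherwise I would fall back on analyzing the semisimple/nilpotent Jordan decomposition of a generic $\ad$ on the nilradical.
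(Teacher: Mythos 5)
Your proposal is correct and follows the same route as the paper, which itself only says ``applying \eqref{bracketA} to $e_i,v_i$'' and records the result: you correctly identify $[e_2,v_3]=v_3$ as a typo (it must be $v_1$ since $e_2\cdot e_3=e_1$; with $v_3$ the Jacobi identity for $e_1,e_2,v_3$ would fail, giving $\tfrac12 v_3\neq\tfrac32 v_3$). For the non-isomorphism claim, which the paper asserts without justification, your first (``cleanest'') route does in fact succeed: in both algebras $[\g,\g]=\mathrm{span}\{e_2,e_3,v_1,v_2,v_3\}$, but the second derived algebra is $\mathbb{R}v_1$ (dimension $1$) for the algebra of Proposition~\ref{ex3D}, whereas the extra bracket $[e_3,v_3]=-v_2$ makes it $\mathrm{span}\{v_1,v_2\}$ (dimension $2$) here, so no fallback to Jordan data is needed.
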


\medskip
{{\bf Examples in arbitrary dimension}.}

\medskip
{\bf 1) ${\bf \h=\mathcal{I}_n}$.} 
 Consider the $n$-dimensional Lie algebra $\h =$span$\{e_1,\dots ,e_n\}$, described by 
\begin{equation}\label{LieIn}
[e_1, e_k]=e_k, \quad k \geq 2,
\end{equation}
and on $\h$ the compatible LSA
\begin{equation}\label{LSAIn}
e_1 \cdot e_1 =2 e_1, \quad e_1 \cdot e_k = e_k, \quad e_k \cdot e_k = e_1, \quad k \geq 2.
\end{equation}
Let $V$ denote the vector space underlying $\h$ and $j: \h \to V$ a linear isomorphism. We fix on $V$ the basis $\{v_i := j(e_i), i=1,\dots ,n\}$. Then, applying \eqref{bracketA} to $e_i,v_i$, we prove the following.

\begin{prop}
Consider the $n$-dimensional real Lie algebra $\h=\mathcal{I}_n$ described by \eqref{LieIn}, $V$ the vector space underlying $\h$ and on it the compatible LSA structure given by \eqref{LSAIn}. Then, the Lie brackets
$$[e_1,e_k]=e_k, \quad [e_1,v_1]=2v_1, \quad [e_k,v_k]=v_1, \quad k \geq 2$$
describe a semidirect product Lie algebra $\g = \h \oplus V=span(e_i) \oplus span (v_i)$, admitting the totally real complex structure $J$ completely determined by $Je_i=v_i$.
\end{prop}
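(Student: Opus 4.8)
The plan is to verify directly that the listed brackets are obtained by applying the general recipe of the preceding remarks to the given LSA, and then to invoke the already-established machinery. First I would recall that, by Example~\ref{exA} and the general remark opening this subsection, once an LSA structure $\cdot$ on a vector space $V$ with underlying Lie algebra $\h$ is fixed, any linear isomorphism $j:\h\to V$ produces the semidirect product $\g=\h\oplus V$ with $V$ an abelian ideal, $[v_i,v_k]=0$, with $[e_i,e_k]=e_i\cdot e_k-e_k\cdot e_i$ and $[e_i,v_k]=jL(e_i)j^{-1}v_k = j(e_i\cdot e_k)$ (using $v_k=j(e_k)$), and with the totally real complex structure $J$ given by $Je_i=v_i$ automatically integrable because $j=J|_\h$ is, by construction, a $1$-cocycle of $(\h,\pi)$ with $\pi=\tilde L$; this is exactly Theorem~\ref{intJE}.

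The only genuine computation is then the bracket table. I would compute $[e_1,e_k]=e_1\cdot e_k-e_k\cdot e_1$. From \eqref{LSAIn} we have $e_1\cdot e_k=e_k$ for $k\ge 2$, and $e_k\cdot e_1$ is not among the listed nonzero products, hence $e_k\cdot e_1=0$; so $[e_1,e_k]=e_k$, and $[e_1,e_1]=0$, $[e_k,e_l]=0$ for $k,l\ge 2$ (no products $e_k\cdot e_l$ with $k\neq l\ge 2$ appear). This reproduces \eqref{LieIn}, confirming the LSA is compatible. Next, $[e_i,v_k]=j(e_i\cdot e_k)$: from $e_1\cdot e_1=2e_1$ we get $[e_1,v_1]=2v_1$; from $e_1\cdot e_k=e_k$ we get $[e_1,v_k]=v_k$ for $k\ge 2$; from $e_k\cdot e_k=e_1$ we get $[e_k,v_k]=v_1$ for $k\ge 2$; and $[e_k,v_l]=j(e_k\cdot e_l)=0$ for $k\neq l$, $k\ge 2$, and $[e_k,v_1]=j(e_k\cdot e_1)=0$. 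Collecting the nonzero ones gives precisely $[e_1,e_k]=e_k$, $[e_1,v_1]=2v_1$, $[e_k,v_k]=v_1$ for $k\ge 2$, matching the statement.

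Finally I would note that $J$ defined by $Je_i=v_i$ extended complex-linearly (so $Jv_i=-e_i$) is totally real for the splitting $\g=\h\oplus V$ because $J\h=V$ and $\h\cap V=\{0\}$, and it is integrable by Theorem~\ref{intJE} since $V$ is abelian and $j$ is a $1$-cocycle of $(\h,\tilde L)$ by Example~\ref{exA}; alternatively one can just check $N_J\equiv 0$ directly on the basis using the computed brackets, but appealing to the theorem is cleaner. There is essentially no obstacle here: the content is a routine verification that the generic construction specializes correctly, the only mild care being the bookkeeping of which LSA products vanish (so that the induced Lie bracket on $\h$ indeed coincides with $\mathcal I_n$ and no spurious brackets $[e_k,v_l]$ appear). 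I would present the proof as: (1) recall $[e_i,e_k]=e_i\cdot e_k-e_k\cdot e_i$ and $[e_i,v_k]=j(e_i\cdot e_k)$ from the construction; (2) substitute \eqref{LSAIn}; (3) read off the table and invoke Theorem~\ref{intJE} for integrality and total reality of $J$.
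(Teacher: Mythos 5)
Your proposal is correct and follows exactly the route the paper intends: the paper offers no separate argument beyond ``applying \eqref{bracketA} to $e_i,v_i$'', i.e.\ the same substitution of the LSA products \eqref{LSAIn} into $[e_i,e_k]=e_i\cdot e_k-e_k\cdot e_i$ and $[e_i,v_k]=j(e_i\cdot e_k)$, with integrability of $J$ delegated to Theorem~\ref{intJE}. One small point: your computation correctly yields the additional nonzero brackets $[e_1,v_k]=v_k$ for $k\ge 2$ (which are in fact forced by the Jacobi identity applied to $e_1,e_k,v_k$), so the table you obtain does not literally ``match the statement'' --- the proposition's displayed list omits this family, evidently a typographical slip in the paper rather than an error in your argument.
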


\medskip
{\bf 2) ${\bf \h=\mathcal{A}_n}$.} We now consider the $n$-dimensional Lie algebra $\h =$span$\{e_1,\dots ,e_n\}$, described by 
\begin{equation}\label{LieAn}
[e_1, e_k]=\alpha_k e_k, \quad k \geq 2,
\end{equation}
for some real constants $\alpha_1,\dots ,\alpha_n $ satisfying $\alpha_i \neq 0$ for all $i$ and $\alpha_{n+2-k}=\alpha_1-\alpha_k$ for all $k \geq 2$. On $\h$ one has the compatible LSA
\begin{equation}\label{LSAAn}
e_1 \cdot e_i =\alpha_i e_1, \quad e_k \cdot e_{n+2-k} = e_1,  \quad i \geq 1 \; \text{and} \; k \geq 2.
\end{equation}
Denoted by $V$ the vector space underlying $\h$ and by $j: \h \to V$ a linear isomorphism, we consider on $V$ the basis $\{v_i := j(e_i), i=1,\dots ,n\}$ and apply \eqref{bracketA} to $e_i,v_i$, proving the following.

\begin{prop}
For any real constants $\alpha_1,\dots ,\alpha_n $ satisfying $\alpha_i \neq 0$ for all $i$ and $\alpha_{n+2-k}=\alpha_1-\alpha_k$ for all $k \geq 2$, consider the $n$-dimensional real Lie algebra $\h=\mathcal{A}_n$ described by \eqref{LieAn}, the vector space $V$ underlying $\h$ and on it the compatible LSA structure given by \eqref{LSAAn}. Then, the Lie brackets
$$[e_1,e_k]=\alpha_k e_k, \quad [e_1,v_k]=\alpha_k v_k, \quad [e_k,v_{n+2-k}]=v_1, \quad k \geq 2$$
describe a semidirect product Lie algebra $\g = \h \oplus V=span(e_i) \oplus span (v_i)$, admitting the totally real complex structure $J$ completely determined by $Je_i=v_i$.
\end{prop}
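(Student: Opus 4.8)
The plan is to treat this as a direct instance of the general correspondence established above: by Theorem~\ref{main} (through Theorem~\ref{intJE} and the construction of Example~\ref{exA}), as soon as one knows that \eqref{LSAAn} really is a \emph{compatible} LSA structure on $\h=\mathcal{A}_n$, the semidirect product $\g$, its bracket, and the integrability and totally real character of $J$ all follow automatically. So the whole argument reduces to (i) verifying the LSA axioms for \eqref{LSAAn}, and then (ii) unwinding the general machinery and reading off the brackets.

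For step (i), compatibility is immediate: the only non-vanishing products of basis vectors prescribed by \eqref{LSAAn} are $e_1\cdot e_i=\alpha_i e_i$ and $e_k\cdot e_{n+2-k}=e_1$ ($k\ge 2$), so $e_1\cdot e_k-e_k\cdot e_1=\alpha_k e_k=[e_1,e_k]$ and $e_i\cdot e_j-e_j\cdot e_i=0=[e_i,e_j]$ for $i,j\ge2$, which is exactly \eqref{LieAn}. The real content is the left-symmetry of the associator, $a(x,y,z)=a(y,x,z)$, which I would check on triples of basis vectors. Almost all triples give $0=0$ on both sides; using $e_i\cdot e_1=0$ for $i\ge2$ one sees that the only triples producing something non-trivial are (up to the permitted swap of the first two slots) those of the form $(e_1,e_k,e_{n+2-k})$, for which
\[ a(e_1,e_k,e_{n+2-k})=(\alpha_k-\alpha_1)\,e_1, \qquad a(e_k,e_1,e_{n+2-k})=-\,\alpha_{n+2-k}\,e_1 . \]
Hence $a(e_1,e_k,e_{n+2-k})=a(e_k,e_1,e_{n+2-k})$ holds exactly because of the hypothesis $\alpha_{n+2-k}=\alpha_1-\alpha_k$. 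I expect this case enumeration — deciding which products $e_i\cdot e_j$ survive and recombine — to be the only (and completely routine) obstacle; the constraint on the $\alpha_i$ is precisely what it asks for, while $\alpha_i\ne0$ is needed only to pin down the isomorphism type of $\mathcal{A}_n$, not for the LSA identity.

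For step (ii), I would apply Example~\ref{exA} with the linear isomorphism $j:\h\to V$ and $v_i:=j(e_i)$: the Lie algebra $\g=\h\oplus V$ acquires the bracket $[(x,u),(y,v)]=([x,y]_L,\tilde\pi(x)v-\tilde\pi(y)u)$ with $\tilde\pi(x)=j\,L(x)\,j^{-1}$, so that $V$ is an abelian ideal, $[v_i,v_k]=0$, $[e_i,e_k]$ is as in \eqref{LieAn}, and $[e_i,v_k]=j\bigl(L(e_i)e_k\bigr)=j(e_i\cdot e_k)$. Feeding in \eqref{LSAAn} gives $[e_1,v_k]=\alpha_k v_k$ and $[e_k,v_{n+2-k}]=v_1$, i.e. exactly the asserted bracket relations, so $\g$ is the claimed semidirect product Lie algebra. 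Finally, since $V$ is abelian and — again by Example~\ref{exA}, using compatibility — $j$ is a $1$-cocycle of $(\h,\tilde\pi)$, Theorem~\ref{intJE} yields that the almost complex structure $J(x,v)=(-j^{-1}v,jx)$, which is precisely the one determined by $Je_i=v_i$, is integrable; and $J\h=V$ with $\h$ and $V$ totally real for $J$, so $J$ is a totally real complex structure on $\g$, as claimed.
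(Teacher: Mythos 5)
Your proof is correct and follows essentially the same route as the paper, which simply applies the general construction (Example~\ref{exA} together with Theorems~\ref{intJE} and \ref{main}) to the bases $e_i$, $v_i=j(e_i)$ and reads off the brackets; your additional explicit verification of the left-symmetry identity — which the paper imports from Burde's examples rather than checking — is exactly where the hypothesis $\alpha_{n+2-k}=\alpha_1-\alpha_k$ enters, and your computation of the two relevant associators is right. Two small remarks: you (correctly) use $e_1\cdot e_i=\alpha_i e_i$, the form forced by compatibility with \eqref{LieAn} and by the asserted bracket $[e_1,v_k]=\alpha_k v_k$, whereas \eqref{LSAAn} as printed reads $e_1\cdot e_i=\alpha_i e_1$ (an apparent typo); and triples such as $(e_1,e_1,e_i)$ or $(e_k,e_{n+2-k},e_m)$ do produce nonzero associators, although for these the identity $a(x,y,z)=a(y,x,z)$ holds automatically, so your conclusion that only $(e_1,e_k,e_{n+2-k})$ yields a genuine condition is still accurate.
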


\subsection{From totally real complex structures to $LSA$}

Let  $\g = \h \oplus V$ denote  a semidirect product Lie algebra, determined by a representation $\pi$, where $V$ is an abelian ideal. We consider an  isomorphism $j: \h \to V$, which is a cocycle of $(\h,\pi)$. Then, by Theorem~\ref{intJE}, $j$ defines a totally real complex structure $J$ given by Equation \eqref{defJ}. Moreover, by Theorem~\ref{main}, the Lie algebra $\h$ will admit a compatible LSA, {explicitly} given  by
$$x\cdot y = j^{-1} \pi(x)j y, 	\quad x,y \in \h.$$
We shall now start from some examples of totally real complex structures obtained in \cite{CO} on tangent and cotangent Lie algebras, and  describe the corresponding compatible LSA.

\medskip

{\bf Tangent Lie algebras.} As proved in \cite{CO}, totally real complex structures on the tangent Lie algebra $T \h$ of $\h$ can only occur when $\h$ is nilpotent. In particular, if $\dim \h =3$, then the only possibility is given by $\h=\h_1$, where $\h_1$ denotes the three-dimensional Heisenberg Lie algebra. Following \cite{CO}, the tangent Lie algebra $T \h _1$:
$$\begin{array}{lll}
\left[ e_1,e_2 \right]=e_3, \; & \left[ e_1,v_2 \right]=v_3, \; & \left[ e_2,v_1 \right]=-v_3
\end{array}$$
admits the totally real complex structures
\begin{equation}\label{Jh1}
J_s e_1=v_1, \quad J_s e_2=-s v_1+v_2, \quad J_s e_3=2v_3, \qquad s=0,1. 
\end{equation}
Clearly, by Equation \eqref{defJ}, these totally real complex structures correspond to the linear isomorphisms $j_s$ completely determined by 
$j_s e_i=J_s e_i$, $i=1,2,3$. Then, on $\h_1$ we have the corresponding compatible LSA given by
$$e_i \cdot _s e_j = j_s ^{-1} \ad _{e_i} j_s (e_j), \quad i,j=1,2,3,$$
determined by $j_s$ and the adjoint representation. Applying the equation above, we obtain the following.

\begin{prop}
For $s=0,1$, consider the totally real complex structures $J_s$ on the tangent Lie algebra of $\h_1$ described in \eqref{Jh1}. Then, 
$\h_1$ admits the corresponding compatible LSA structures $\cdot _s$, described by 
$$\begin{array}{l} 
e_1 \cdot _s e_2 =\frac 12 e_3,  \quad  e_2 \cdot _s e_1 =-\frac 12 e_3,  \quad e_2 \cdot _s e_2 =\frac s2 e_3 .  
\end{array}$$
\end{prop}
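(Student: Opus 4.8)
The plan is to apply the general recipe established in the preceding subsection: given the totally real complex structures $J_s$ on $T\h_1$, the associated linear isomorphisms $j_s:\h_1\to V$ are determined by $j_s e_i=J_s e_i$, and by Theorem~\ref{main} the compatible LSA on $\h_1$ is $e_i\cdot_s e_j=j_s^{-1}\ad_{e_i}j_s(e_j)$. So the whole computation reduces to (a) writing down the matrices of $j_s$ and $j_s^{-1}$ in the basis $\{v_1,v_2,v_3\}$ of $V$, (b) applying $\ad_{e_i}$ to each $j_s(e_j)$ using the adjoint action of $\h_1$ (equivalently the brackets $[e_k,v_l]$ listed for $T\h_1$), and (c) pulling back via $j_s^{-1}$.

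First I would record $j_s e_1=v_1$, $j_s e_2=-sv_1+v_2$, $j_s e_3=2v_3$, so $j_s^{-1}v_1=e_1$, $j_s^{-1}v_2=se_1+e_2$, $j_s^{-1}v_3=\tfrac12 e_3$. Next, for each pair $(i,j)$ I compute $\ad_{e_i}j_s(e_j)=[e_i,j_s(e_j)]$ in $T\h_1$. Since the only nonzero brackets involving the $v$'s are $[e_1,v_2]=v_3$ and $[e_2,v_1]=-v_3$, almost all of these vanish. The surviving cases are: $[e_1,j_s(e_2)]=[e_1,-sv_1+v_2]=v_3$, giving $e_1\cdot_s e_2=j_s^{-1}v_3=\tfrac12 e_3$; and $[e_2,j_s(e_1)]=[e_2,v_1]=-v_3$, giving $e_2\cdot_s e_1=-\tfrac12 e_3$; and $[e_2,j_s(e_2)]=[e_2,-sv_1+v_2]=-s[e_2,v_1]=sv_3$, giving $e_2\cdot_s e_2=j_s^{-1}(sv_3)=\tfrac s2 e_3$. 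All other products $e_i\cdot_s e_j$ are zero. This matches the asserted LSA.

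Finally I would do the consistency checks: verify that $e_i\cdot_s e_j-e_j\cdot_s e_i$ reproduces the bracket of $\h_1$ (here $e_1\cdot_s e_2-e_2\cdot_s e_1=\tfrac12 e_3-(-\tfrac12 e_3)=e_3=[e_1,e_2]$, and all other brackets vanish as they must), confirming compatibility, and — since Theorem~\ref{main} already guarantees the left-symmetry via integrability of $J_s$ — one may optionally double-check the associator symmetry $a(x,y,z)=a(y,x,z)$ on basis triples, which is immediate because every product lands in the center $\RR e_3$ and $e_3$ acts trivially from the left.

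There is essentially no obstacle here: the content is entirely in the bookkeeping of a $3\times 3$ change of basis composed with a nilpotent adjoint action, and the nilpotency of $\h_1$ makes nearly every term vanish. The only point requiring minor care is keeping the factor of $2$ in $j_s e_3=2v_3$ straight when applying $j_s^{-1}$, which is what produces the $\tfrac12$ coefficients; the parameter $s$ enters only through the single extra product $e_2\cdot_s e_2=\tfrac s2 e_3$.
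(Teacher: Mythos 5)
Your proposal is correct and follows exactly the paper's route: the paper likewise reads off $j_s$ from $J_s$, forms $e_i\cdot_s e_j=j_s^{-1}\ad_{e_i}j_s(e_j)$, and evaluates it on the basis, with the nilpotency of $\h_1$ killing all but the three stated products. Your explicit inverse $j_s^{-1}v_2=se_1+e_2$ and the compatibility check $e_1\cdot_s e_2-e_2\cdot_s e_1=e_3$ are consistent with the stated result.
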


Let now consider again the $(2n+1)-$dimensional Heisenberg Lie algebra $\h_n$, described by $\left[x_i,y_i \right]= z$. 
On the tangent Lie algebra $T \h _n$ there exist many totally real complex structures, because they are in a one-to-one correspondence with the nonsingular derivations of $\h_n$ \cite{CO}. For example, if we take the totally real complex structure corresponding to the very simple nonsingular derivation $j:\h \to \h$ described by
$$j x_i=x_i, \quad j y_k = y_k, \quad j z = 2z,$$
then we have the corresponding compatible LSA on $\h_n$, described by
$$\begin{array}{l}
x_i \cdot y_k = \frac 12 \delta_{ik} z, \quad i,k=1,\dots,n.
\end{array}$$
Further compatible LSA on $\h_n$ can be easily constructed by the same argument.

\medskip
{\bf Cotangent Lie algebras.} Totally real complex structures on cotangent Lie algebras $T^* \h$ of three-dimensional
Lie algebras  $\h$ were completely classified in \cite[Proposition 3.8]{CO}.  They exist if and only if either $\h$ is unimodular or isomorphic to $\mathbb{R} \times \mathfrak{aff}(\mathbb R)$. In these cases, the linear isomorphism $j:\h \to \h ^*$, determining the complex structure by Equation \eqref{defJ}, admits a matrix representation as follows:
$$\begin{array}{ll}
T^* \h_1: \; \left(\begin{array}{ccc}
a & b & c \\
d & e & f \\
-c & -f & 0
\end{array}\right);  & \qquad
T^* \mathfrak{r}_{3,-1}: \; \left(\begin{array}{ccc}
a & b & c \\
-b & 0 & d \\
-c & -d & 0
\end{array}\right); \\ \\  
T^*  \mathfrak{r}_{3,0}: \; \left(\begin{array}{ccc}
a & b & c \\
-b & 0 & 0 \\
d & 0 & e
\end{array}\right);  & \qquad
T^* \mathfrak{r}'_{3,0}: \; \left(\begin{array}{ccc}
a & b & c \\
-b & 0 & d \\
-c & -d & 0
\end{array}\right);
\end{array}$$
for some real constants $a,\dots,f$. Observe that $\mathfrak{r} _{3,0}=\mathbb{R} \times \mathfrak{aff}(\mathbb R)$. Because there exist totally real complex structures for the cotangent Lie algebras $T^* \mathfrak{r} _{3,\lambda}$ when $\lambda=0,-1$, in the previous subsection we gave new examples of totally real
complex structures for semidirect product Lie algebras $\mathfrak{r} _{3,\lambda} \oplus V$ assuming $\lambda \neq 0$ and $-1<\lambda <1$.

In the case of $T^* \mathfrak{r} _{3,-1}$ we shall now describe the compatible LSA corresponding to the whole family of totally real complex structures described above. The tangent Lie algebra $T^* \mathfrak{r} _{3,-1}$ is given by (see \cite{CO})
$$\begin{array}{llll}
\left[ e_1,e_2 \right]=e_2, \; & \left[ e_1,e_3 \right]=-e_3, \; & \; & \\[4pt]
\left[ e_1,v_2 \right]=-v_2, \; & \left[ e_1,v_3 \right]=v_3, \; &  \left[ e_2,v_2 \right]=v_1, \; & \left[ e_3,v_3 \right]=-v_1  \\
\end{array}$$
and admits a four-parameter family of totally real complex structures, namely, the ones determined by the above linear isomorphisms $j$, for all values of real constants $a,b,c,d$, such that $\det (j)=ad^2 \neq 0$. Then, each of these isomorphisms determines on $\mathfrak{r} _{3,-1}$ a corresponding compatible LSA, explicitly given by
\begin{equation}\label{coadj}
e_i \cdot  e_j = j ^{-1} \ad^* _{e_i} j (e_j), \quad i,j=1,2,3,
\end{equation}
determined by $j$ and the coadjoint representation. Applying \eqref{coadj}, we then prove the following.

\begin{prop}
Consider the totally real complex structures $J$ on the cotangent Lie algebra of $\mathfrak{r} _{3,-1}$, corresponding to the linear isomorphisms $j: \h \to \h^*$ described above, for any real constants $a,b,c,d$ with $ad \neq 0$. Then, 
$\mathfrak{r} _{3,-1}$ admits the following corresponding compatible LSA structures: 
$$\begin{array}{ll} 
e_1 \cdot e_1 =\frac bd  e_3,  \quad & e_1 \cdot e_2 =\frac{1}{ad} (-bd e_1+ (ad+bc) e_2-b^2 e_3),  \\[4pt]
e_1 \cdot e_3 =\frac{1}{ad} (cd e_1-c^2 e_2-(ad-bc) e_3),   \quad &
e_2 \cdot e_1 =-\frac{b}{ad} (d e_1- c e_2+b e_3),  \\[4pt]
e_2 \cdot e_3 =\frac{1}{a} (d e_1-c e_2+b e_3),  \quad &
e_3 \cdot e_1 =\frac{c}{ad} (d e_1-c e_2+b e_3),  \\[4pt]
e_3 \cdot e_2 =\frac{1}{a} (d e_1-c e_2+b e_3). 
\end{array}$$
\end{prop}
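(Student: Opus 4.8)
The statement has two layers, and only the second requires computation. The plan for the first layer is to invoke the machinery already set up: by the introductory discussion of this subsection, together with Theorem~\ref{intJE} and Theorem~\ref{main}, a totally real complex structure $J$ on $T^*\h$ of the form \eqref{defJ} is the same datum as a linear isomorphism $j\colon\h\to\h^*$ that is a $1$-cocycle of $(\h,\ad^*)$, and in that case the formula \eqref{coadj}, i.e. $x\cdot y:=j^{-1}(\ad^*_x(j(y)))$, automatically defines a compatible LSA structure on $\h$. Since $\mathfrak{r}_{3,-1}$ is unimodular, \cite[Proposition~3.8]{CO} tells us that the displayed four-parameter matrices (with $\det j=ad^2\neq0$) are exactly the isomorphisms arising this way; one may also, if desired, check directly that each such matrix satisfies the cocycle condition for $\pi=\ad^*$. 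Hence the only thing left to prove is that evaluating \eqref{coadj} on $\h=\mathfrak{r}_{3,-1}$ over this family produces the table in the statement.

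For the second layer, I would first make the coadjoint representation explicit. From $[e_1,e_2]=e_2$ and $[e_1,e_3]=-e_3$ --- equivalently, by reading off the recalled Lie brackets of $T^*\mathfrak{r}_{3,-1}$ --- one gets, in the dual basis $v_1,v_2,v_3$, that $\ad^*_{e_1}$ is the diagonal map $\mathrm{diag}(0,-1,1)$, that $\ad^*_{e_2}$ is the rank-one map with $v_2\mapsto v_1$ and $v_1,v_3\mapsto0$, and that $\ad^*_{e_3}$ is the rank-one map with $v_3\mapsto -v_1$ and $v_1,v_2\mapsto0$. Next I would invert the displayed matrix of $j$: a single $3\times3$ inversion whose output has every entry a polynomial in $a,b,c,d$ over $\det j=ad^2$; it is convenient to record the three columns $j^{-1}(v_1),j^{-1}(v_2),j^{-1}(v_3)\in\h$. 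Then, for each ordered pair $(i,k)\in\{1,2,3\}^2$, I would take $j(e_k)$ (the $k$-th column of the matrix of $j$), apply $\ad^*_{e_i}$, and finally apply $j^{-1}$. Because the three matrices $\ad^*_{e_i}$ are so simple, almost every intermediate vector $\ad^*_{e_i}(j(e_k))$ is a scalar multiple of a single $v_\ell$, so $e_i\cdot e_k$ is at once that scalar multiple of the precomputed column $j^{-1}(v_\ell)$; assembling the nine results gives the asserted table.

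As a built-in consistency check, I would confirm that $e_i\cdot e_k-e_k\cdot e_i$ reproduces the bracket of $\mathfrak{r}_{3,-1}$, which both certifies compatibility and catches sign slips; this is in fact forced by the first layer. The main obstacle is purely bookkeeping: the four parameters $a,b,c,d$ survive the inversion and propagate through all nine products, so each output must be simplified carefully --- repeatedly using $\det j=ad^2$ to clear denominators --- to reach the compact forms displayed. There is no conceptual difficulty beyond Theorems~\ref{intJE} and~\ref{main}; the rest is a careful but elementary linear-algebra calculation.
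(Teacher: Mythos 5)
Your proposal is correct and follows essentially the same route as the paper, whose proof consists precisely of applying \eqref{coadj} with the coadjoint maps $\ad^*_{e_1}=\mathrm{diag}(0,-1,1)$, $\ad^*_{e_2}\colon v_2\mapsto v_1$, $\ad^*_{e_3}\colon v_3\mapsto -v_1$ and the inverse of the displayed matrix of $j$ (read by columns), the compatibility with Theorems~\ref{intJE} and \ref{main} being the conceptual frame. One caveat when you actually execute it: the single exception to your remark that each $\ad^*_{e_i}(j(e_k))$ is a multiple of one $v_\ell$ is $\ad^*_{e_1}(j(e_1))=b\,v_2-c\,v_3$, and there the computation gives $e_1\cdot e_1=\tfrac{1}{ad^2}\bigl(-2bcd\,e_1+(2bc^2+acd)\,e_2+(abd-2b^2c)\,e_3\bigr)$, which agrees with the printed entry $\tfrac{b}{d}e_3$ only when $bc=0$ --- a discrepancy in the paper's table rather than in your method.
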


For the remaining cases, namely, $T^* \h_1$, $T^*  \mathfrak{r}_{3,0}$ and 
$T^*  \mathfrak{r}'_{3,0}$, we shall restrict ourselves to some examples of totally real complex structures explicitly described in \cite{CO}, leaving the calculation of further compatible LSA to the interested reader. Since we use the same argument already explained in the case of $T^*  \mathfrak{r}_{3,-1}$, we only report below, for the different cases, the description of the cotangent Lie algebras and of the totally complex structures we are considering.
$$\begin{array}{llll}
T^* \h_1: \; & \left[ e_1,e_2 \right]=e_3, \; & \left[ e_1,v_3 \right]=-v_2, \; &  \left[ e_2,v_3 \right]=v_1,  \\[4pt]
& J e_1 = v_1, & J e_2 = v_3, & J e_3 =-v_2; \\[6pt]
T^*  \mathfrak{r}_{3,0}: \; & \left[ e_1,e_2 \right]=e_2, \; & \left[ e_1,v_2 \right]=-v_2, \; &  \left[ e_2,v_2 \right]=v_1,  \\[4pt]
& J e_1 = v_2, & J e_2 = -v_1, & J e_3 =v_6; \\[6pt]
T^* \mathfrak{r}'_{3,0}: \; & \left[ e_1,e_2 \right]=-e_3, \; & \left[ e_1,e_3 \right]=e_2, \; &  \left[ e_1,v_2 \right]=-v_3,  \\[4pt]
& \left[ e_1,v_3 \right]=-v_2, \; & \left[ e_2,v_3 \right]=-v_1, \; &  \left[ e_3,v_2 \right]=v_1,  \\[4pt]
& J e_1 = \varepsilon v_1=\pm e_1, & J e_2 = v_3, & J e_3 =-v_2 .
\end{array}$$
Applying Equation \eqref{coadj} to the structures above, we prove the  following.

\begin{prop}
Three-dimensional Lie algebras $\h_1$, $\mathfrak{r}_{3,0}$ and  $\mathfrak{r}'_{3,0}$ admit the following LSA compatible structures, corresponding to the totally real complex structures $J$ on the cotangent Lie algebras $T^* \h_1$, $T^*  \mathfrak{r}_{3,0}$ and 
$T^*  \mathfrak{r}'_{3,0}$ described above:  
$$\begin{array}{lllll} 
 \h_1: \; & e_1 \cdot e_2 = e_3,  \; & e_2 \cdot e_2 =e_1; & & \\[4pt]
\mathfrak{r}_{3,0} : \; & e_1 \cdot e_1 = -e_1,  \; & e_2 \cdot e_1 =-e_2; &  &\\[4pt]
\mathfrak{r}'_{3,0}: \; & e_1 \cdot e_2 = -e_3,  \; & e_1 \cdot e_3 =e_2, \;  & e_2 \cdot e_2 =-\varepsilon e_1,\;  & e_3 \cdot e_3 =-\varepsilon e_1.
\end{array}$$
\end{prop}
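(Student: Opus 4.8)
The plan is to prove the proposition by direct computation, specializing to each of the three Lie algebras the same recipe already carried out in detail for $\ct^*\mathfrak{r}_{3,-1}$. For each of $\h_1$, $\mathfrak{r}_{3,0}$ and $\mathfrak{r}'_{3,0}$ I would proceed in three bookkeeping steps. First, read off the linear isomorphism $j:\h\to\h^*$ from the displayed complex structure: since $J$ is totally real with $J\h=\h^*$, one simply sets $j(e_i):=Je_i$, rewrites these images in the dual basis $\{v_1,v_2,v_3\}$, and inverts the resulting $3\times 3$ matrix. Second, extract the coadjoint action from the mixed brackets of the cotangent algebra: by the very construction of $\ct^*\h$ recalled in Example~\ref{tan-co}, one has $[e_i,v_j]=\ad^*(e_i)v_j$, so the nonzero values of $\ad^*(e_i)$ on the basis $\{v_j\}$ are read directly off the displayed brackets, all unlisted ones being zero. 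Third, substitute into $e_i\cdot e_j=j^{-1}\bigl(\ad^*(e_i)\,j(e_j)\bigr)$ and collect coefficients in the basis $\{e_1,e_2,e_3\}$; comparing the outcome with the displayed tables finishes the computation. The only points requiring care are the sign in the convention $\ad^*(x)\varphi=-\varphi\circ\ad(x)$, the inversion of $j$, and, in the $\mathfrak{r}'_{3,0}$ case, keeping track of $\varepsilon^2=1$ throughout the one-parameter family.

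What remains is to explain why the resulting bilinear operation is genuinely a compatible LSA on $\h$ and not merely some bilinear map, and here nothing new has to be computed. By \cite{CO} each $J$ listed above is one of the totally real (hence integrable) complex structures on the corresponding cotangent algebra, so Theorem~\ref{intJE} applies: $\h^*$ is abelian (automatic in the cotangent setting) and $j$ is a $1$-cocycle of $(\h,\ad^*)$. Consequently $\tilde\pi(x):=j^{-1}\ad^*(x)j$ is an equivalent representation for which $\Id:\h\to\h$ is a $1$-cocycle, and by Theorem~\ref{thm2} (equivalently, by Theorem~\ref{main}) the bilinear map $x\cdot y:=j^{-1}\ad^*(x)jy$ is a torsion-free flat connection on $\h$, i.e. a compatible LSA structure. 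Thus every table entry in the statement is automatically left-symmetric and induces the original bracket of $\h$; the explicit computation above only serves to exhibit it.

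I do not expect a genuine obstacle: the argument is entirely routine substitution, with the structural content already contained in Theorems~\ref{intJE}, \ref{thm2} and \ref{main} together with the classification of \cite{CO}. The one place where an error is easy to make is sign-tracking --- in particular, which of $[e_i,v_j]$ or $[v_j,e_i]$ equals $\ad^*(e_i)v_j$, and the direction of $j^{-1}$ --- so I would verify each of the three cases against the already-checked model computation for $\ct^*\mathfrak{r}_{3,-1}$, and, as a consistency check, confirm that each displayed $J$ indeed appears in \cite[Proposition~3.8]{CO}, since only then is the compatibility of the resulting product automatic rather than something to be proved by hand.
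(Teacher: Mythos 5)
Your proposal matches the paper's own argument: the paper likewise obtains each table by reading $j$ off the displayed $J$, extracting $\ad^*$ from the listed brackets, and applying Equation \eqref{coadj}, with the left-symmetry and compatibility of the resulting product guaranteed by the earlier equivalence theorems rather than checked by hand. Your added attention to the sign convention for $\ad^*$ and to $\varepsilon^2=1$ in the $\mathfrak{r}'_{3,0}$ case is exactly the right care to take, and the computation indeed reproduces the stated tables.
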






\end{document}